\theoremstyle{plain}
\newtheorem{theorem}{Theorem}[section]
\newtheorem{lemma}[theorem]{Lemma}
\newtheorem{corollary}[theorem]{Corollary}
\newtheorem{definition}[theorem]{Definition}
\newtheorem{remark}[theorem]{Remark}
\numberwithin{equation}{section}
\DeclareMathOperator{\oA}{A}
\DeclareMathOperator{\oB}{B}
\DeclareMathOperator{\oC}{C}
\DeclareMathOperator{\oD}{D}
\DeclareMathOperator{\oJ}{J}
\DeclareMathOperator{\oL}{L}
\DeclareMathOperator{\oM}{M}
\DeclareMathOperator{\oN}{N}
\DeclareMathOperator{\oP}{P}
\newcommand{\mP}{\mathbb{P}}
\newcommand{\mS}{\mathbb{S}}
\begin{document}

\title{Quadratic decomposition of bivariate orthogonal polynomials}

\author{Am\'ilcar Branquinho}
\address{CMUC, Department of Ma\-the\-ma\-tics, University of Coimbra, Apartado 3008, EC Santa Cruz, 3001-501 Coimbra, Portugal.}
\email[Corresponding author]{ajplb@mat.uc.pt}

\author{Ana Foulqui\'e Moreno}
\address{Departamento de Matem\'atica, Universidade de Aveiro, 3810-193 Aveiro, Portugal.}
\email{foulquie@ua.pt}

\author{Teresa E. P\'erez}
\address{Instituto 
de Matem\'aticas IMAG \&
Departamento de Matem\'{a}tica Aplicada, Facultad de Ciencias. Universidad de Granada (Spain).}
\email{tperez@ugr.es}

\subjclass[2010]{Primary 42C05, 33C50}

\keywords{Bivariate orthogonal polynomials, quadratic decomposition process, B\"acklund-type relations}


\begin{abstract}
We describe bivariate polynomial sequences orthogonal to a symmetric weight function in terms of several bivariate polynomial sequences orthogonal with respect to Christoffel transformations of the initial weight under a quadratic transformation.
We analyze the construction of a symmetric bivariate orthogonal polynomial sequence from a given one, orthogonal to a weight function defined on the positive plane. 
In this description plays an important role a sort of B\"acklund type matrix transformations for the involved three term matrix coefficients. 
We take as a case study relations between symmetric orthogonal polynomials defined on the ball and on the simplex.
\end{abstract}

\maketitle

\section{Motivation}

In \cite{Xu98, Xu01} the author found connections between even orthogonal polynomials on the ball and simplex polynomials in $d$ variables.
Following \cite[section 4]{Xu01}, let
\begin{align*}
W^\mathbf{B}(x_1,x_2,\ldots, x_d)= W(x_1^2,x_2^2, \ldots, x_d^2)
\end{align*}
be a weight function defined on the unit ball on $\mathbb{R}^d$, and let 
\begin{align*}
W^\mathbf{T}(u_1,u_2,\ldots, u_d) = \frac{1}{\sqrt
{u_1\,u_2\,\cdots \,u_d}} \, W(
{u_1},
{u_2},\ldots ,
{u_d} ), &&
 (u_1,u_2,\ldots, u_d)\in \mathbf{T}^d,
\end{align*}
where $\mathbf{T}^d$ is the unit simplex on $\mathbb{R}^d$. For $n\geqslant0$, and $\alpha\in\mathbb{N}^d_0$ a multi-index, let $S_{2n,\alpha}(x_1,x_2, \ldots,x_d)$ be an orthogonal polynomial associated to the weight function $W^\mathbf{B}$ of even degree in each of its variables. Then Y. Xu proved that it can be written in terms of orthogonal polynomials on the simplex as
\begin{align*}
S_{2n,\alpha}(x_1,x_2, \ldots, x_d)=P_{n,\alpha} (x^2_1,x^2_2, \ldots, x^2_d), &&
|\alpha|  
= n,
\end{align*}
where $P_{n,\alpha} (x^2_1,x^2_2, \ldots, x^2_d)$ is, for each $n \geqslant 0$, an orthogonal polynomial of total degree $n$ on the simplex associated to $W^\mathbf{T}$. 
In this way, there exists an important \emph{partial} connection between classical ball polynomials and simplex ones.

Inspired by these relations, we try to analyze the situation for the leftover polynomials in this procedure,~\emph{i.e.}, we want to know the properties of the polynomials with odd powers that were left in the above identification. We succeed doing so in a general framework, showing that these polynomials are related to new families of bivariate orthogonal polynomials, resulting from a Christoffel modification that we explicitly identify. Hence we have a totally answer in the case $d = 2$, that generalizes the one given by T. Chihara in \cite{Ch78}.

The paper is organized as follows.
In Section~\ref{sec2} we state the basic tools and results that we will need along the paper. The Section~\ref{symm-d2} is devoted to describe symmetric monic orthogonal polynomial sequences, starting with the basic properties and regarding how the polynomials are. In Section~\ref{symm_2} we analyze the quadratic decomposition process. This will be done in an equivalent procedure,~\emph{i.e.}, given a symmetric monic orthogonal polynomial sequence, we separate it in four families of polynomials in a \emph{zip} way, and we deduce the inherit properties of orthogonality for each of the four families, obtaining that they are Christoffel modifications of the quadratic transformation of the original weight function.
As a converse result, we construct a symmetric monic orthogonal polynomial sequence from a given one.

In Section \ref{Backlund_S} we give relations between the matrix coefficients of the three term relations of the involved families.
In addition, the matrix coefficients of the Christoffel transformations for the four families of orthogonal polynomials are given in terms of the matrix coefficients of the three term relations for the symmetric polynomials.
These matrices enable us to reinterpret the block Jacobi matrix associated with the four orthogonal polynomials sequences in terms of a $L \, U$ or $U L$ representation.

Finally, in the last Section, we complete the study started in \cite{Xu98, Xu01}  describing explicitly the four families of orthogonal polynomials on the simplex deduced from a symmetric polynomial system orthogonal on the ball.

\section{Basic Facts}\label{sec2}

For each $n\geqslant 0$, let $\Pi_n$ 
denote the linear space of bivariate polynomials of total degree not greater than $n$, and let
$\Pi=\bigcup_{n\geqslant 0} \Pi_n$. A polynomial $p(x,y)\in\Pi_n$ is a linear combination of monomials, \emph{i.e.}, 
\begin{align*}
p(x,y) = \sum_{m=0}^{n}\sum_{i=0}^m a_{m-i,i} x^{m-i}y^{i}, && a_{m-i,i}\in \mathbb{R},
\end{align*}
and it is of total degree $n$ if $\sum_{i=0}^n |a_{n-i,i}| >0$.

A polynomial $p(x,y)$ of total degree $n$ is called \emph{centrally symmetric}~if $p(-x,-y) = (-1)^n \, p(x,y).$ As a consequence, if the degree of the polynomial is even, then it only contains monomials of even degree, and if the degree of the polynomial is odd, then the polynomial only contains monomials of odd~powers.

We will say that a polynomial of partial degree $h$~in~$x$ is $x$-\emph{symmetric} if
\begin{align*}
p(-x,y) = (-1)^h \, p(x,y), && \forall (x,y).
\end{align*}
Therefore, if $h$ is even, $p(x,y)$ only contains even powers in $x$, and if $h$ is odd, it only contains odd powers in $x$. Analogously, we define the $y$-\emph{symmetric} for a polynomial of degree $k$ in $y$ as
\begin{align*}
p(x,-y) = (-1)^k \, p(x,y), && \forall (x,y).
\end{align*}
A $y$-\emph{symmetric} polynomial of degree $k$ in $y$ only contains odd powers in $y$ when $k$ is an odd number, and it contains only even power in $y$ when $k$ is even.

Obviously, if a polynomial is $x$-\emph{symmetric} and $y$-\emph{symmetric}, then it is \emph{centrally symmetric}. In this case, if the polynomial has total degree $n$, with degree $h$ in $x$ and degree~$k$ in~$y$, and $n=h+k$, then if $h$ is an odd number (respectively if $k$ is an odd number), then the polynomial only contains odd powers in $x$ (respectively, odd powers in $y$), and if $h$ is even (respectively, if $k$ is even), then it only contains even powers in $x$ (respectively, it contains only even powers in $y$).

For each $n\geqslant 0$, let $\mathbb{X}_n$ denote the $(n+1)\times 1$ column vector
$$
\mathbb{X}_n=
\begin{bmatrix}
x^n & x^{n-1} y & \cdots & xy^{n-1} & y^n
\end{bmatrix}^{\top},
$$
where, as usual, the superscript means the transpose. Then $\{\mathbb{X}_n\}_{n\geqslant 0}$ is called the \emph{canonical basis} of $\Pi$. As in~\cite{DX14}, for $n \geqslant 0$, we denote by
$$
\oL_{n,1}=
\left[ \begin{array}{@{}c|c@{}}
\begin{matrix}
1 \\
 & \ddots \\
 & & 1
\end{matrix} 
 & 
\begin{matrix}
0 \\ \vdots \\ 0
\end{matrix} 
\end{array} \right],
 \qquad
\oL_{n,2}=
\left[ \begin{array}{@{}c|c@{}}
\begin{matrix}
0 \\ \vdots \\ 0
\end{matrix} &
\begin{matrix}
1 \\
 & \ddots \\
 & & 1
\end{matrix} 
\end{array} \right],
$$
such~that \quad $ \oL_{n,1}\,\mathbb{X}_{n+1}=x\,\mathbb{X}_{n}$ \quad and \quad $\oL_{n,2}\,\mathbb{X}_{n+1}=y\,\mathbb{X}_{n}$.

For $i, j = 0,1$, and $n\geqslant0$, we introduce the matrices: $\oJ_{n}^{(i,j)}$ of dimension $(n+1)\times (2n+1+i+j)$ in the following way
\begin{align*}
\oJ_{n}^{(i,j)}
= \begin{bmatrix} j_{h,k}^{(n,i,j)} \end{bmatrix}_{h,k=0}^{n\times (2n-1+i+j)},
\end{align*}
such that $j_{h,2h+j}^{(n,i,j)}=1$, for $0\leqslant h\leqslant n$, the rest of the elements are zero. In particular,
\begin{align*}
\oJ_{n}^{(0,0)} & =
\begin{bmatrix}
1 & 0 & 0 &\cdots & 0 & 0\\ 
0 & 0 & 0 &\cdots & 0 & 0 \\ 
0 & 0 & 1 &\cdots & 0 & 0 \\ 
\vdots & \vdots & \vdots & \ddots & \vdots & \vdots\\
0 & 0 & 0 &\cdots & 0 & 1 
\end{bmatrix}, &
\oJ_{n}^{(1,0)} & = 
\begin{bmatrix}
1 & 0 & 0 &\cdots & 0 & 0\\ 
0 & 0 & 0 &\cdots & 0 & 0 \\ 
0 & 0 & 1 &\cdots & 0 & 0 \\ 
\vdots & \vdots & \vdots & \ddots & \vdots& \vdots\\
0 & 0 & 0 &\cdots & 1 & 0 
\end{bmatrix}, \\
\oJ_{n}^{(0,1)} & = \begin{bmatrix}
0 & 1 & 0 & 0 &\cdots & 0 & 0 \\ 
0 & 0 & 0 & 0 &\cdots & 0 & 0 \\ 
0 & 0 & 0 & 1 &\cdots & 0 & 0 \\ 
\vdots & \vdots & \vdots & \vdots & \ddots & \vdots & \vdots\\
0 & 0 & 0 & 0 &\cdots & 0 & 1 
\end{bmatrix}, &
\oJ_{n}^{(1,1)} & = \begin{bmatrix}
0 & 1 & 0 & 0 & \cdots & 0 & 0\\ 
0 & 0 & 0 & 1 &\cdots & 0 & 0 \\ 
0 & 0 & 0 & 0 &\cdots & 0 & 0 \\ 
\vdots & \vdots & \vdots & \vdots & \ddots & \vdots & \vdots\\
0 & 0 & 0 & 0 &\cdots & 1 & 0 
\end{bmatrix}.
\end{align*}
Observe that the $\oJ$-matrices are obtained from the identity matrices by introducing columns of zeros. The objective of these matrices is to extract the odd or the even elements in a vector of adequate~size. The transpose of these matrices introduce zeroes into a vector in the odd or even positions.

A simple computation allows us to prove the next result.

\begin{lemma}\label{J-L}
For $n\geqslant0$ and $k=1,2$, the following relations hold:
\begin{align*}
 \oJ_{n}^{(0,0)} \, \oL_{2n,k} = \oJ_{n}^{(2-k,k-1)}, &&
 \oJ_{n}^{(1,1)} \, \oL_{2n+2,k} = \oL_{n,k}\, \oJ_{n+1}^{(k-1,2-k)}, \\
  \oJ_{n}^{(k-1,2-k)} \, \oL_{2n+1,k} = \oJ_{n}^{(1,1)}, && 
 \oJ_{n}^{(2-k,k)} \, \oL_{2n+1,k} = \oL_{n,k}\, \oJ_{n+1}^{(0,0)}. 
\end{align*}
\end{lemma}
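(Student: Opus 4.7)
The plan is to verify each identity, in both cases $k=1$ and $k=2$, by a direct calculation on the canonical basis. Every matrix appearing in the statement is defined by an explicit $0/1$ entry pattern, equivalently by its action on some $\mX_m$; since that action determines the matrix, it suffices to show that the two sides of each equation send the appropriate $\mX_{2n+1}$ or $\mX_{2n+2}$ to the same vector of monomials.

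Two elementary observations do all the work. First, $\oL_{m,1}\mX_{m+1}=x\,\mX_m$ and $\oL_{m,2}\mX_{m+1}=y\,\mX_m$ amount to extracting the first or the last $m+1$ entries of $\mX_{m+1}$, respectively. Second, the nonzero pattern $j^{(n,i,j)}_{h,2h+j}=1$ says that $\oJ_n^{(i,j)}$ selects the entries of a vector of length $2n+1+i+j$ sitting in positions $2h+j$, $h=0,\ldots,n$. Consequently, each side of every identity is a composition of a contiguous truncation with a parity-selection, and this composition is itself a $0/1$ selection matrix whose nonzero pattern is determined by straightforward index tracking.

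As a representative case, the identity $\oJ_n^{(0,0)}\oL_{2n,1}=\oJ_n^{(1,0)}$ is verified by noting that $\oJ_n^{(0,0)}(x\,\mX_{2n})$ and $\oJ_n^{(1,0)}\mX_{2n+1}$ both produce the list of monomials $x^{2n+1-2h}y^{2h}$ for $h=0,\ldots,n$. For the identities of the form $\oJ_n^{(1,1)}\oL_{2n+2,k}=\oL_{n,k}\oJ_{n+1}^{(k-1,2-k)}$, the point is that ``subsampling after truncating'' equals ``truncating after subsampling'', the index shifts lining up because multiplication by $x$ or $y$ merely relabels even and odd positions in $\mX$. The remaining two identities are analogous: one checks that the prescribed parity-selection applied to $x\,\mX_{2n+1}$ (respectively $y\,\mX_{2n+1}$) retrieves precisely the monomials picked out on the right-hand side.

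The main obstacle is purely notational bookkeeping: one must verify in each case that the matrix product is well-defined and that the parity and starting position of the selected entries coincide on the two sides. No conceptual difficulty arises, so I would organize the proof as a compact case table, each entry checked by reading off the positions selected by the composition on the canonical basis.
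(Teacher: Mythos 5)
Your approach is exactly what the paper intends: the paper gives no argument beyond the remark that ``a simple computation allows us to prove the next result,'' and your index-tracking verification on the canonical basis (using that equality of two constant matrices applied to $\mX_m$ forces equality of the matrices, by linear independence of monomials) is precisely that computation. One caveat your ``check the product is well-defined'' step would have caught: in the fourth identity as printed, $\oJ_{n}^{(2-k,k)}$ has size $(n+1)\times(2n+3-k+k)=(n+1)\times(2n+3)$ for $k=1$ and is not even among the defined matrices for $k=2$, so it cannot be multiplied by the $(2n+2)\times(2n+3)$ matrix $\oL_{2n+1,k}$; the identity holds with $\oJ_{n}^{(2-k,k-1)}$ in its place, consistent with how the lemma is later applied.
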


\subsection{Orthogonal polynomial systems (OPS)}

Let $\{ \oP_{n,m}(x,y): 0\leqslant m \leqslant n, n\geqslant 0\}$ denote a basis of $\Pi$ such that, for a 
fixed~$n\geqslant 0$,
$\deg \oP_{n,m}(x,y) = n$, and the set $\{\oP_{n,m}(x,y): 0\leqslant m \leqslant n\}$ contains $n+1$ 
linearly independent polynomials of total degree exactly~$n$.
We can write the vector of~polynomials
\begin{align*}
\mP _n= \begin{bmatrix}
\oP_{n,0}(x,y) & \oP_{n,1}(x,y) & \cdots & \oP_{n,n}(x,y)
\end{bmatrix}^{\top}.
\end{align*}
The sequence of polynomial vectors of increasing size
$\{\mP _n\}_{n\geqslant0}$ is called a \emph{polynomial system}~(PS), and it is a basis of $\Pi$. We say that is a \emph{monic polynomial sequence} if every entry has the form
\begin{align*}
P_{n,j} (x,y) = x^{n-j}\,y^j + \sum_{m=0}^{n-1}\sum_{i=0}^m a_{m-i,i} x^{m-i}y^{i}, && 0\leqslant j \leqslant n.
\end{align*}

Let $W(x,y)$ be a weight function defined on a domain $ \Omega \subset\mathbb{R}^2$, and we suppose the existence of every moment,
\begin{align*}
\mu_{h,k} = \int_{\Omega } x^h\,y^k \, W(x,y) \,\mathrm{d} x \mathrm{d} y < +\infty, &&
h, k \geqslant 0 .
\end{align*}
As usual, we define the inner product
\begin{align}\label{ip}
( p, q ) = \int_{ \Omega } p(x,y)\,q(x,y)\, W(x,y) \, \mathrm{d} x \mathrm{d} y, && p, q \in \Pi,
\end{align}
and remember how the inner product acts over polynomial matrices. Let $\oA=
\begin{bmatrix}
a_{i,j}(x,y)
\end{bmatrix}_{i,j=1}^{h,k}$
and
$\oB = \begin{bmatrix} b_{i,j}(x,y) \end{bmatrix}_{i,j=1}^{l,k}$
be two polynomial matrices. The action of \eqref{ip} over polynomial matrices is
defined as the $h \times l$ matrix (\emph{cf.}~\cite{DX14}),
\begin{align*}
( \oA, \oB ) = \int_{ \Omega } \oA(x,y)\, \oB(x,y)^\top\, W(x,y) \, \mathrm{d} x \mathrm{d} y =
\begin{bmatrix}
\displaystyle
\int_{ \Omega } c_{i,j}(x,y) W(x,y) \, \mathrm{d} x \mathrm{d} y
\end{bmatrix}_{i,j=1}^{h,l},
\end{align*}
where $\oC = \oA \cdot \oB^\top = \begin{bmatrix} c_{i,j}(x,y) \end{bmatrix}_{i,j=1}^{h,l}$.

A~PS $\{\mP _n\}_{n\geqslant 0}$ is an
\emph{orthogonal polynomial system (OPS) with respect to $(\cdot, \cdot)$}~if
\begin{align*}
( \mP _n, \mP_m )
= \begin{cases}
\mathtt{0}_{(n+1) \times (m+1)}, & n\ne m,\\
\mathbf{P}_n, & n=m,
\end{cases}
\end{align*}
where $\mathbf{P}_n$ is a positive-definite symmetric matrix of size $n+1$, and~$\mathtt{0}_{(n+1) \times (m+1)}$, or~$\mathtt{0}$ for short, is the zero matrix of adequate size. It was proved \cite{DX14} that there exists a unique monic orthogonal polynomial system associated to~$W(x,y)$, and we will call MOPS for short.

In this work we will use Christoffel modifications of a weight function given by a multiplication of a polynomial of degree~$1$.
In the next Lemma we recall the relations between the involved monic~OPS (\cite{APPR14}).

\begin{lemma}\label{shortRel}
Let $W(x,y)$ be a weight function defined on a domain $\Omega\subset\mathbb{R}^2$, and let $\lambda(x,y) = a\,x + b\,y$ be a polynomial with $|a| + |b| >0$, such that $W^\mathbf *(x,y) = \lambda(x,y)\,W(x,y)$ is again a weight function on $\Omega$. Let 
$\{\mathbb{P}_n\}_{n\geqslant0}$ and $\{\mathbb{P}^\mathbf *_n\}_{n\geqslant0}$ be the respective monic OPS.
Then, for all $n \geqslant 1$,
\begin{align*}
\mP_{n} & = \mP^\mathbf *_{n} + \oM_{n}\,\mP^\mathbf *_{n-1},\\
\lambda(x,y)\,\mP^\mathbf *_{n} & = \big( a \oL_{n,1} + b \oL_{n,2} \big) \,\mP_{n+1} 
+ \oN_{n}\,\mP_{n}, 
\end{align*}
where 
\begin{align*}
\oM_{n} = \mathbf{P}_n\,(a\,\oL_{n-1,1}^\top + b\,\oL_{n-1,2}^\top)\,(\mathbf{P}_{n-1}^\mathbf *)^{-1}, &&
\oN_{n} = \mathbf{P}_n^\mathbf *\,\mathbf{P}_{n}^{-1},
\end{align*}
and
\begin{align*}
\mathbf{P}_n =  \int_{\Omega} \mP_n\,\mP_n^\top\,W(x,y)\, \mathrm{d}x \mathrm{d}y,
 \qquad 
\mathbf{P}^\mathbf *_n =  \int_{\Omega} \mP^\mathbf *_n\,(\mP^\mathbf *_n)^\top\,W^\mathbf{*}(x,y)\, \mathrm{d}x \mathrm{d}y ,
\end{align*}
are non-singular matrices of size $(n+1)$.
\end{lemma}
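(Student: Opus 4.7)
The plan is to prove the two relations separately by expansion in the opposite basis, using the shift between $W$ and $W^{\mathbf{*}}$ to move $\lambda$ across the inner product and collapse everything by orthogonality; the only real content is then identifying the leading coefficients via the action of $\oL_{n,1},\oL_{n,2}$ on the canonical basis $\mX_n$.

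For the first relation, I would expand $\mP_n$ in the basis $\{\mP^{\mathbf{*}}_k\}_{k=0}^n$. Since both systems are monic and share the same leading block (the coefficient of $\mX_n$ is the identity), the coefficient of $\mP^{\mathbf{*}}_n$ must be $\oI_{n+1}$, giving
\begin{align*}
\mP_n = \mP^{\mathbf{*}}_n + \sum_{k=0}^{n-1} C_{n,k}\,\mP^{\mathbf{*}}_k.
\end{align*}
Pairing both sides with $\mP^{\mathbf{*}}_k$ in the $W^{\mathbf{*}}$ inner product and using $W^{\mathbf{*}}=\lambda W$ gives $C_{n,k}\,\mathbf{P}^{\mathbf{*}}_k = (\mP_n, \lambda \mP^{\mathbf{*}}_k)_W$. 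For $k\leqslant n-2$ the polynomial entries of $\lambda\mP^{\mathbf{*}}_k$ have degree at most $n-1$, so by orthogonality of $\mP_n$ against lower degrees under $W$ we get $C_{n,k}=\mathtt{0}$. For $k=n-1$, write $\mP^{\mathbf{*}}_{n-1}=\mX_{n-1}+(\text{lower})$; since $x\,\mX_{n-1}=\oL_{n-1,1}\,\mX_n$ and $y\,\mX_{n-1}=\oL_{n-1,2}\,\mX_n$ we obtain
\begin{align*}
\lambda\,\mP^{\mathbf{*}}_{n-1} = \bigl(a\,\oL_{n-1,1}+b\,\oL_{n-1,2}\bigr)\mX_n + (\text{degree}\leqslant n-1),
\end{align*}
and monic-ness plus orthogonality of $\mP_n$ yield $(\mP_n,\mX_n)_W=\mathbf{P}_n$, so $M_n\,\mathbf{P}^{\mathbf{*}}_{n-1} = \mathbf{P}_n(a\,\oL_{n-1,1}^\top+b\,\oL_{n-1,2}^\top)$, which is the stated formula.

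For the second relation, $\lambda\,\mP^{\mathbf{*}}_n$ is a polynomial vector of degree $n+1$, hence admits an expansion $\lambda\,\mP^{\mathbf{*}}_n = A_n\,\mP_{n+1}+B_n\,\mP_n + \sum_{k<n} D_{n,k}\,\mP_k$. The leading block is fixed by $\lambda\,\mX_n=(a\,\oL_{n,1}+b\,\oL_{n,2})\,\mX_{n+1}$, forcing $A_n = a\,\oL_{n,1}+b\,\oL_{n,2}$. Pairing with $\mP_n$ under $W$ and converting to $W^{\mathbf{*}}$,
\begin{align*}
B_n\,\mathbf{P}_n = (\lambda\,\mP^{\mathbf{*}}_n,\mP_n)_W = (\mP^{\mathbf{*}}_n,\mP_n)_{W^{\mathbf{*}}},
\end{align*}
and substituting the already established $\mP_n=\mP^{\mathbf{*}}_n+\oM_n\mP^{\mathbf{*}}_{n-1}$ collapses the right side to $\mathbf{P}^{\mathbf{*}}_n$ by orthogonality of $\{\mP^{\mathbf{*}}_k\}$, giving $B_n=\oN_n$. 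For $k<n$ the same trick yields $D_{n,k}\mathbf{P}_k=(\mP^{\mathbf{*}}_n,\mP_k)_{W^{\mathbf{*}}}$; expanding $\mP_k$ in the $\mP^{\mathbf{*}}$ basis (which involves only degrees $\leqslant k<n$) and using orthogonality of $\mP^{\mathbf{*}}_n$ forces $D_{n,k}=\mathtt{0}$.

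I expect no serious obstacle beyond bookkeeping: one must take care with the direction of transposes (the pairing produces $A^\top$ on the right when $Q=A\mathbb{R}$) and with the non-square matrices $\oL_{n,k}$, whose sizes must match $\mX_{n+1}\mapsto\mX_n$ at each step. The invertibility of $\mathbf{P}_n$ and $\mathbf{P}^{\mathbf{*}}_n$, needed to solve for $\oM_n$ and $\oN_n$, is guaranteed by their being positive-definite symmetric matrices associated with the monic OPS of genuine weight functions.
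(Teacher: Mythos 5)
The paper does not actually prove Lemma~\ref{shortRel}; it is quoted from the reference \cite{APPR14}, so there is no in-text argument to compare against. Your proposal is the standard (and correct) expansion-in-the-other-basis argument: the degree count killing $C_{n,k}$ for $k\leqslant n-2$, the identification $(\mP_n,\mX_n)_W=\mathbf{P}_n$ together with $\lambda\,\mP^{\mathbf *}_{n-1}=(a\,\oL_{n-1,1}+b\,\oL_{n-1,2})\,\mX_n+(\text{lower})$ for $\oM_n$, and the substitution of the first relation to collapse $(\mP^{\mathbf *}_n,\mP_n)_{W^{\mathbf *}}$ to $\mathbf{P}^{\mathbf *}_n$ for $\oN_n$ are all sound, with the transposes handled consistently with the paper's matrix inner product.
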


\section{Symmetric Monic Orthogonal Polynomial Sequences}\label{symm-d2}

A weight function $W(x,y)$ defined on $\Omega \subset \mathbb{R}^2$ is called \emph{centrally symmetric} (\emph{cf.} \cite[p.~76]{DX14}) if satisfies
\begin{align*}
(x,y)\in \Omega \Rightarrow (-x,-y) \in \Omega && \text{and} && W(-x,-y) = W(x,y), && \forall (x,y)\in \Omega.
\end{align*}
Therefore, by a natural change of variables, we get
\begin{align*}
\mu_{h,k} =  \int_{\Omega} x^h y^k W(x,y)\,\mathrm{d}x \mathrm{d}y = \int_{\Omega} (-x)^h(-y)^kW(-x,-y)\,\mathrm{d}x \mathrm{d}y =  (-1)^{h+k} \mu_{h,k},
\end{align*}
and then $\mu_{h,k} =0$, for $h+k$ an odd integer number.

We introduce an additional definition of symmetry.

\begin{definition}
We say that a weight function $W(x,y)$ is $x$-\emph{symmetric} if
\begin{align*}
(x,y)\in \Omega \Rightarrow (-x,y) \in \Omega, && \text{and} && W(-x,y) = W(x,y), && \forall (x,y)\in \Omega.
\end{align*}
Analogously, the weight function is $y$-\emph{symmetric} if 
\begin{align*}
(x,y)\in \Omega \Rightarrow (x,-y) \in \Omega, && \text{and} && W(x,-y) = W(x,y),&& \forall (x,y)\in \Omega.
\end{align*}
A ~$x$-\emph{symmetric} and~$y$-\emph{symmetric} weight function is called~$x\,y$-\emph{sym\-met\-ric}.
\end{definition}

Obviously, if $W(x,y)$ is $x\,y$-\emph{symmetric} then it is centrally symmetric. As a consequence, if
$W(x,y)$ is $x\,y$-\emph{symmetric}, then $\mu_{h,k} =0$ when, at least one, $n$ or $m$ are odd numbers.

Let $\{\mS_n\}_{n\geqslant0}$ be the MOPS associated with a $x\,y$-\emph{symmetric} weight function satisfying
\begin{align}\label{IP_S}
(\mS_n,\mS_m) = \int_{\Omega} \mS_n(x,y)\,\mS_m(x,y)^\top\,W(x,y)\,\mathrm{d}x \mathrm{d}y = \begin{cases}
\mathtt{0} , & n\neq m,\\
\mathbf{S}_n , & n=m,
\end{cases}
\end{align}
where $\mathbf{S}_n$ is a $(n+1)$ positive-definite symmetric matrix.

\begin{lemma} \label{xy-sim}
If the explicit expression of every vector polynomial is given by
\begin{align*}
\mS_n(x,y)
= \begin{bmatrix}
S_{n,0}(x,y) & S_{n,1}(x,y) & S_{n,2}(x,y) & \cdots & S_{n,n}(x,y)
\end{bmatrix}^\top,
\end{align*}
where 
\begin{align*}
S_{n,k}(x,y) &= \sum_{i=0}^{\lfloor (n-k)/2\rfloor} \sum_{j=0}^{\lfloor k/2\rfloor} a_{i,j}^{n,k} \, x^{n-k-2i}\,y^{k-2j}, && 0\leqslant k \leqslant n ,
\end{align*}
with $a_{0,0}^{n,k} = 1$,
then the polynomials are $x\,y$-\emph{symmetric}, that is, 
\begin{align*}
S_{n,k}(x,y) = (-1)^n S_{n,k}(-x,-y) = (-1)^{n-k} S_{n,k}(-x,y) = (-1)^k S_{n,k}(x,-y).
\end{align*}
\end{lemma}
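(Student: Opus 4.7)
The plan is to deduce the three symmetry identities by direct parity inspection of the monomial expansion assumed for $S_{n,k}(x,y)$. The hypothesis is that every monomial appearing in $S_{n,k}$ has the form $x^{n-k-2i} y^{k-2j}$; thus the $x$-exponents all share the parity of $n-k$, and the $y$-exponents all share the parity of $k$. Each of the three claimed identities follows by reading off these parities.

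First, I substitute $x \mapsto -x$ in the explicit sum. Each monomial acquires a factor $(-1)^{n-k-2i}=(-1)^{n-k}$ (since $2i$ is even), so the common factor $(-1)^{n-k}$ can be pulled out of both summations, giving
\[
S_{n,k}(-x,y) \;=\; (-1)^{n-k}\,S_{n,k}(x,y).
\]
Next, the identical computation with $y \mapsto -y$ produces, using $(-1)^{k-2j}=(-1)^k$,
\[
S_{n,k}(x,-y) \;=\; (-1)^{k}\,S_{n,k}(x,y).
\]

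Finally, composing these two relations — or, equivalently, substituting $(x,y)\mapsto(-x,-y)$ directly in the sum and noting that each monomial picks up $(-1)^{(n-k-2i)+(k-2j)} = (-1)^{n}$ — yields
\[
S_{n,k}(-x,-y) \;=\; (-1)^{n}\,S_{n,k}(x,y),
\]
which is the third identity claimed in the lemma.

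There is no genuine obstacle here: the statement is a structural observation about the hypothesized monomial form, and no use needs to be made of the orthogonality, the inner product \eqref{IP_S}, or the leading-coefficient normalization $a_{0,0}^{n,k}=1$. The only point to be slightly careful about is that the indices $n-k-2i$ and $k-2j$ are non-negative in the admissible ranges $0\leqslant i\leqslant \lfloor (n-k)/2\rfloor$ and $0\leqslant j\leqslant \lfloor k/2\rfloor$, so the exponents appearing are bona fide non-negative integers with the stated parities, and the parity extraction is uniform across the double sum.
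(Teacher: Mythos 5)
Your parity-extraction argument is correct and complete; the paper in fact states Lemma \ref{xy-sim} without any written proof, treating it as an immediate consequence of the assumed monomial form, and your computation is exactly the verification the authors leave implicit. You are also right that neither orthogonality nor the normalization $a_{0,0}^{n,k}=1$ plays any role here.
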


When a bivariate polynomial $S_{n,k}(x,y)$ is $x\,y$-\emph{symmetric} then it has the same parity order in every variable,~\emph{i.e.}, if the partial degree in the first variable $x$ is even (respectively,~odd), then all powers in $x$ are even (respectively,~odd), and analogously, if the partial degree in the second variable $y$ is even (respectively,~odd), then all powers in $y$ are even (respectively,~odd).

Therefore, the vector polynomial $\mS_{n}(x,y)$ can be separated in a \emph{zip} way, attending to the parity of the powers of $x$ and $y$ in its entries. In fact, for even, respectively odd degree, we get

\begin{align}\label{separation}
\mS_{2n} = 
\begin{bmatrix}
S_{2n,0} \\ 0 \\ S_{2n,2} \\ 0 \\ \vdots \\ 0 \\ S_{2n,2n}
\end{bmatrix} +
\begin{bmatrix}
0 \\ S_{2n,1} \\ 0 \\ S_{2n,3} \\ \vdots \\S_{2n,2n-1} \\ 0
\end{bmatrix},  \quad \mS_{2n+1} = 
\begin{bmatrix}
S_{2n+1,0} \\ 0 \\ S_{2n+1,2} \\ 0 \\ \vdots \\ S_{2n+1,2n}\\0\end{bmatrix} +
\begin{bmatrix}
0 \\ S_{2n+1,1} \\ 0 \\ S_{2n+1,3} \\ \vdots \\0\\S_{2n+1,2n+1}
\end{bmatrix}.
\end{align}

\begin{lemma} \label{representation}
We can express the monic orthogonal polynomial vectors as
\begin{align}
\label{S-P-R1}
\mS_{2n}(x,y) & = \mP_n^{(0,0)}(x^2, y^2) + x\,y\,\mP_{n-1}^{(1,1)}(x^2, y^2),\\
\label{S-P-R2}
\mS_{2n+1}(x,y) & = x\, \mP_n^{(1,0)}(x^2, y^2) + y\,\mP_{n}^{(0,1)}(x^2, y^2),
\end{align}
where, for $n\geqslant 0$,

\medskip

\noindent
$\mP_n^{(0,0)}(x^2, y^2)$ is a vector of size $(2n+1)\times 1$ whose odd entries are independent monic polynomials of exact degree~$n$ on $(x^2,\,y^2)$, and its even entries are zeroes,

\noindent
$\mP_{n}^{(1,1)}(x^2, y^2)$ is a vector of size $(2n+3)\times 1$ whose even entries are independent monic polynomials of exact degree~$n$ on $(x^2,\,y^2)$, and its odd entries are zeroes,

\noindent
$\mP_n^{(1,0)}(x^2, y^2)$ is a vector of size $(2n+2)\times 1$ whose odd entries are independent monic polynomials of exact degree~$n$ on $(x^2,\,y^2)$, and its even entries are zeroes,

\noindent 
$\mP_n^{(0,1)}(x^2, y^2)$ is a vector of size~$(2n+2)\times 1$ whose even entries are independent monic polynomials of exact degree~$n$ on $(x^2,\,y^2)$, and its odd entries are zeroes.

\end{lemma}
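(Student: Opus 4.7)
The result is a direct consequence of the parity structure recorded in Lemma~\ref{xy-sim} combined with the decomposition \eqref{separation}. My plan is, in each of four parity cases for the index pair $(n,k)$, to factor out the unique ``parity prefactor'' from every monomial of $S_{n,k}(x,y)$ and to reinterpret the quotient as a polynomial in the new variables $u=x^2$, $v=y^2$ of the correct total degree.

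Concretely, Lemma~\ref{xy-sim} tells us that $S_{n,k}(x,y)$ contains only monomials $x^{n-k-2i}y^{k-2j}$, so the parities of both partial degrees are determined by those of $n-k$ and $k$. For $\mS_{2n}$: if $k=2j$ is even then $S_{2n,2j}(x,y)=P_{n,j}^{(0,0)}(x^2,y^2)$, while if $k=2j+1$ is odd then both partial degrees are odd and we can write $S_{2n,2j+1}(x,y)=xy\,P_{n-1,j}^{(1,1)}(x^2,y^2)$. For $\mS_{2n+1}$: the entries at even $k=2j$ have $x$ of odd and $y$ of even partial degree, which yields $S_{2n+1,2j}(x,y)=x\,P_{n,j}^{(1,0)}(x^2,y^2)$; the entries at odd $k=2j+1$ are analogous with $x$ and $y$ swapped, giving $S_{2n+1,2j+1}(x,y)=y\,P_{n,j}^{(0,1)}(x^2,y^2)$. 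In each case the leading monomial $x^{n-k}y^{k}$ of $S_{n,k}$ is transported onto the correct monic leading monomial in $u=x^2$, $v=y^2$ of a polynomial of total degree $n$ (or $n-1$ in the $(1,1)$ case), which checks both the announced degree and the monic normalization.

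Finally, I would assemble the extracted scalar polynomials into the four vectors $\mP_n^{(0,0)}$, $\mP_{n-1}^{(1,1)}$, $\mP_n^{(1,0)}$, $\mP_n^{(0,1)}$ by inserting them at the nonzero slots prescribed by \eqref{separation} and padding the remaining positions with zeros; a direct count of entries then recovers the claimed sizes $2n+1$, $2n+3$, $2n+2$, $2n+2$ and confirms that the zero entries sit at the appropriate parity of positions. Linear independence of the nontrivial entries is inherited from the linear independence of $S_{n,0},\ldots,S_{n,n}$ guaranteed by the MOPS property.

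I do not foresee any substantial obstacle: once Lemma~\ref{xy-sim} is granted, the statement is essentially a reinterpretation of \eqref{separation} under the substitution $(x,y)\mapsto(x^2,y^2)$. The only mild care required is keeping the ``odd'' versus ``even'' conventions for positions in a vector straight, which is made transparent by adopting the indexing implicit in the $\oJ_{n}^{(i,j)}$ matrices introduced earlier; those same matrices will then provide a compact way to express the identities \eqref{S-P-R1}--\eqref{S-P-R2} in the subsequent sections.
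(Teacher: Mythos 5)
Your proposal is correct and follows exactly the route the paper intends: the lemma is presented there as an immediate consequence of the parity structure in Lemma~\ref{xy-sim} and the zip separation \eqref{separation}, which is precisely the factor-out-the-parity-prefactor and substitute $(u,v)=(x^2,y^2)$ argument you give, including the degree, monicity, size, and independence checks.
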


These families will be called \emph{big} vector polynomials associated with $\{\mS_n\}_{n\geqslant0}$.
We must observe that the \emph{big} families are formed by vectors of polynomials in the variables $(x^2, y^2)$, that contains polynomials of independent degree intercalated with zeros.

Our objective is to \emph{extract} the odd entries in the vectors $\mP^{(i,0)}_{n}(x, y)$, and the even entries in $\mP^{(i,1)}_{n}(x, y)$, for $i=0,1$. 

\begin{lemma}
For $n\geqslant0$, and $i, j = 0,1$, we define the $(n+1)\times 1$ vector of polynomials
\begin{align*}
\widehat{\mP}_n^{(i,j)}(x,y) = \oJ_{n}^{(i,j)}\mP_n^{(i,j)}(x,y), && n\geqslant0.
\end{align*}
Then, its entries are independent polynomials of exact degree $n$, and therefore, the sequences of vectors of polynomials $\{\widehat{\mP}_n^{(i,j)}\}_{n\geqslant0}$ are \emph{polynomial systems}.
\end{lemma}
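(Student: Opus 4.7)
The plan is to verify the statement by direct inspection: the matrices $\oJ_n^{(i,j)}$ were designed precisely as selection operators, and the vectors $\mP_n^{(i,j)}$ have just the right sparsity pattern for these selectors to extract exactly the $n+1$ nonzero entries.

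First, I would translate the defining rule $j_{h,2h+j}^{(n,i,j)}=1$ into a plain statement: row $h$ of $\oJ_n^{(i,j)}$ has its unique nonzero entry in column $2h+j$, for $0\leqslant h\leqslant n$. Consequently, if $\mathbb{V}$ is any column vector whose length is at least $2n+1+j$, then the entries of $\oJ_n^{(i,j)}\,\mathbb{V}$ are exactly the components of $\mathbb{V}$ in positions $j,\,2+j,\,4+j,\ldots,\,2n+j$ (using $0$-indexing), that is, the odd-indexed entries in the $1$-indexed convention when $j=0$, and the even-indexed entries when $j=1$. I would also check that the column dimension $2n+1+i+j$ of $\oJ_n^{(i,j)}$ matches the size of $\mP_n^{(i,j)}$ listed in Lemma~\ref{representation} (namely $2n+1$, $2n+2$, $2n+2$, $2n+3$ for $(i,j)=(0,0),(1,0),(0,1),(1,1)$ respectively), so that the product $\oJ_n^{(i,j)}\mP_n^{(i,j)}$ is well defined and yields an $(n+1)\times 1$ vector.

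Next, I would treat the four cases jointly. By Lemma~\ref{representation}, each $\mP_n^{(i,0)}(x^2,y^2)$ carries independent monic polynomials of exact degree $n$ in $(x^2,y^2)$ in its odd ($1$-indexed) positions, and zeros in the even positions, while each $\mP_n^{(i,1)}(x^2,y^2)$ has the opposite parity pattern. By the observation of the previous paragraph, $\oJ_n^{(i,0)}$ extracts the odd-indexed entries and $\oJ_n^{(i,1)}$ the even-indexed ones. Hence in all four cases the product $\widehat{\mP}_n^{(i,j)}=\oJ_n^{(i,j)}\mP_n^{(i,j)}$ is exactly the column vector of the $n+1$ nonzero entries of $\mP_n^{(i,j)}$. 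These being independent monic polynomials of exact degree $n$, the sequence $\{\widehat{\mP}_n^{(i,j)}\}_{n\geqslant 0}$ forms a polynomial system in the sense recalled in Section~\ref{sec2}, which is precisely the conclusion.

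The only subtlety, and thus the main (mild) obstacle, is keeping two different parity conventions straight: the description in Lemma~\ref{representation} speaks of \emph{odd} and \emph{even} entries in a $1$-indexed way, while the shift $2h+j$ inside $\oJ_n^{(i,j)}$ is written $0$-indexed. Once this correspondence is set up cleanly, the verification is purely mechanical, and no further ingredients beyond Lemma~\ref{representation} and the explicit formulas defining $\oJ_n^{(i,j)}$ are required.
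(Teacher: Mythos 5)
Your verification is correct: the index bookkeeping ($2h+j$ in $0$-indexed columns versus the odd/even description of Lemma~\ref{representation}), the dimension check against the sizes $2n+1+i+j$, and the conclusion that the extracted entries are $n+1$ independent polynomials of exact degree $n$ all check out. The paper states this lemma without proof, and the argument it implicitly intends is exactly this direct inspection of the $\oJ$-matrices as selection operators, so your proposal matches the paper's approach.
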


\section{Quadratic decomposition process}\label{symm_2}

Taking into account Lemma \ref{xy-sim}, we start studying the inherit properties of orthogonality of the polynomial systems $\{\widehat{\mP}_n^{(i,j)}\}_{n\geqslant0}$, for $i,j=0,1$.

\begin{theorem}\label{direct}
Let $\{\mS_n\}_{n\geqslant0}$ be a $x\,y$-symmetric monic orthogonal polynomial system associated with a weight function $W(x,y)$ defined on a domain $\Omega\subset \mathbb{R}^2$.
Then, the four families of polynomials $\{\widehat{\mP}_n^{(i,j)}\}_{n\geqslant0}$, for $i,j=0,1$, 
defined in terms of the big ones by~\eqref{S-P-R1},~\eqref{S-P-R2}
and
$\widehat{\mP}_n^{(i,j)} = \oJ_{n}^{(i,j)}\,\mP_n^{(i,j)}$, are monic orthogonal polynomial systems (MOPS) associated respectively, with the weight~functions 
\begin{align*}
W^{(0,0)}(x,y) =& \dfrac{1}{4}\,\dfrac{1}{\sqrt{x\,y}}\,W(\sqrt{x},\sqrt{y}) , 
 \\
W^{(1,0)}(x,y) =& \dfrac{1}{4}\,\sqrt{\dfrac{x}{y}}\,W(\sqrt{x},\sqrt{y})=x\, W^{(0,0)}(x,y) ,
 \\
W^{(0,1)}(x,y) =& \dfrac{1}{4}\,\sqrt{\dfrac{y}{x}}\,W(\sqrt{x},\sqrt{y})=y\,W^{(0,0)}(x,y) ,
 \\
W^{(1,1)}(x,y) =& \dfrac{1}{4}\,\sqrt{x\,y}\,W(\sqrt{x},\sqrt{y})=x\,y\,W^{(0,0)}(x,y) ,
\end{align*}
for all $ (x,y) \in \Omega^\mathbf * = \{(x,y)\in\mathbb{R}^2: x, y \geqslant 0, (\sqrt{x}, \sqrt{y})\in \Omega\}.$
\end{theorem}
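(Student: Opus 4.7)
The plan is to extract orthogonality of each $\widehat{\mP}_n^{(i,j)}$ from that of $\{\mS_n\}_{n\geqslant 0}$ by isolating parity-compatible entries in the matrix $(\mS_n,\mS_m)$. Thanks to Lemmas~\ref{xy-sim} and~\ref{representation}, the entries of $\mS_{2n}$ at odd one-indexed positions are exactly the nonzero components of $\mP_n^{(0,0)}(x^2,y^2)$, while the entries at even positions are $x\,y$ times the nonzero components of $\mP_{n-1}^{(1,1)}(x^2,y^2)$; likewise the odd and even entries of $\mS_{2n+1}$ are $x\,\widehat{\mP}_n^{(1,0)}(x^2,y^2)$ and $y\,\widehat{\mP}_n^{(0,1)}(x^2,y^2)$ after stripping the interlaced zeros via $\oJ_n^{(i,j)}$. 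By construction each $\widehat{\mP}_n^{(i,j)}$ is a monic polynomial vector of exact degree $n$ in the variables $(x^2,y^2)$.

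The analytic tool is the substitution $u=x^2$, $v=y^2$. Because $W$ is $x\,y$-symmetric and $\Omega$ is symmetric with respect to both coordinate axes, splitting $\Omega$ into its four quadrant parts and changing variables in each yields, for every integrable $f(u,v)$,
\begin{equation*}
\int_{\Omega} f(x^2, y^2)\,W(x,y)\,\mathrm{d}x\,\mathrm{d}y
\;=\; 4\int_{\Omega^\mathbf *} f(u,v)\,W^{(0,0)}(u,v)\,\mathrm{d}u\,\mathrm{d}v,
\end{equation*}
and inserting the factor $x^2$, $y^2$ or $x^2\,y^2$ on the left corresponds to replacing $W^{(0,0)}$ by $W^{(1,0)}$, $W^{(0,1)}$ or $W^{(1,1)}$ on the right.

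With these two ingredients, orthogonality follows by reading off sub-blocks of $(\mS_n,\mS_m)=\mathtt{0}$ for $n\neq m$. In $(\mS_{2n},\mS_{2m})$ the odd-odd sub-block reduces, after the substitution, to $4$ times the Gram matrix of $\widehat{\mP}_n^{(0,0)}$ and $\widehat{\mP}_m^{(0,0)}$ against the weight $W^{(0,0)}$; the even-even sub-block reduces analogously to $4$ times the Gram matrix of $\widehat{\mP}_{n-1}^{(1,1)}$ and $\widehat{\mP}_{m-1}^{(1,1)}$ against $W^{(1,1)}$; the mixed odd-even sub-block vanishes automatically as the integral of a function odd in both $x$ and $y$ against a $x\,y$-symmetric weight. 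The analogous split of $(\mS_{2n+1},\mS_{2m+1})$ produces the orthogonality of $\{\widehat{\mP}_n^{(1,0)}\}$ against $W^{(1,0)}$ and of $\{\widehat{\mP}_n^{(0,1)}\}$ against $W^{(0,1)}$, while $(\mS_{2n},\mS_{2m+1})$ vanishes identically by parity and imposes no new constraint. Since each $W^{(i,j)}$ is a positive weight on $\Omega^\mathbf *$ and the components of $\widehat{\mP}_n^{(i,j)}$ are linearly independent polynomials of exact degree $n$, the corresponding Gram matrix is positive definite, so each family is a genuine MOPS.

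The main bookkeeping point is to verify that every entry of $(\mS_n,\mS_m)=\mathtt{0}$ is accounted for by exactly one of the four diagonal sub-blocks above and that the off-diagonal sub-blocks vanish by parity rather than by coupling the four families. This is precisely the role of the extraction matrices $\oJ_n^{(i,j)}$: they realise the four sub-blocks as independent Gram matrices of the $\widehat{\mP}_n^{(i,j)}$, so that the single identity $(\mS_n,\mS_m)=\mathtt{0}$ produces four \emph{independent} orthogonality relations.
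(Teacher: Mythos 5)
Your proof is correct and follows essentially the same route as the paper: decompose the Gram matrix $(\mS_n,\mS_m)$ via the parity representation of Lemma~\ref{representation}, discard the mixed blocks by $x\,y$-symmetry, and transport the surviving diagonal blocks to $\Omega^{\mathbf *}$ through the substitution $u=x^2$, $v=y^2$ summed over the four quadrants, identifying the weights $W^{(i,j)}$. If anything, your justification that the two diagonal blocks vanish separately when $n\neq m$ (the nonzero entries of the two summands occupy disjoint positions) is more precise than the paper's brief appeal to ``the positivity of the inner product''.
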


\begin{figure}
 \centering
\includegraphics[width=0.475\textwidth]{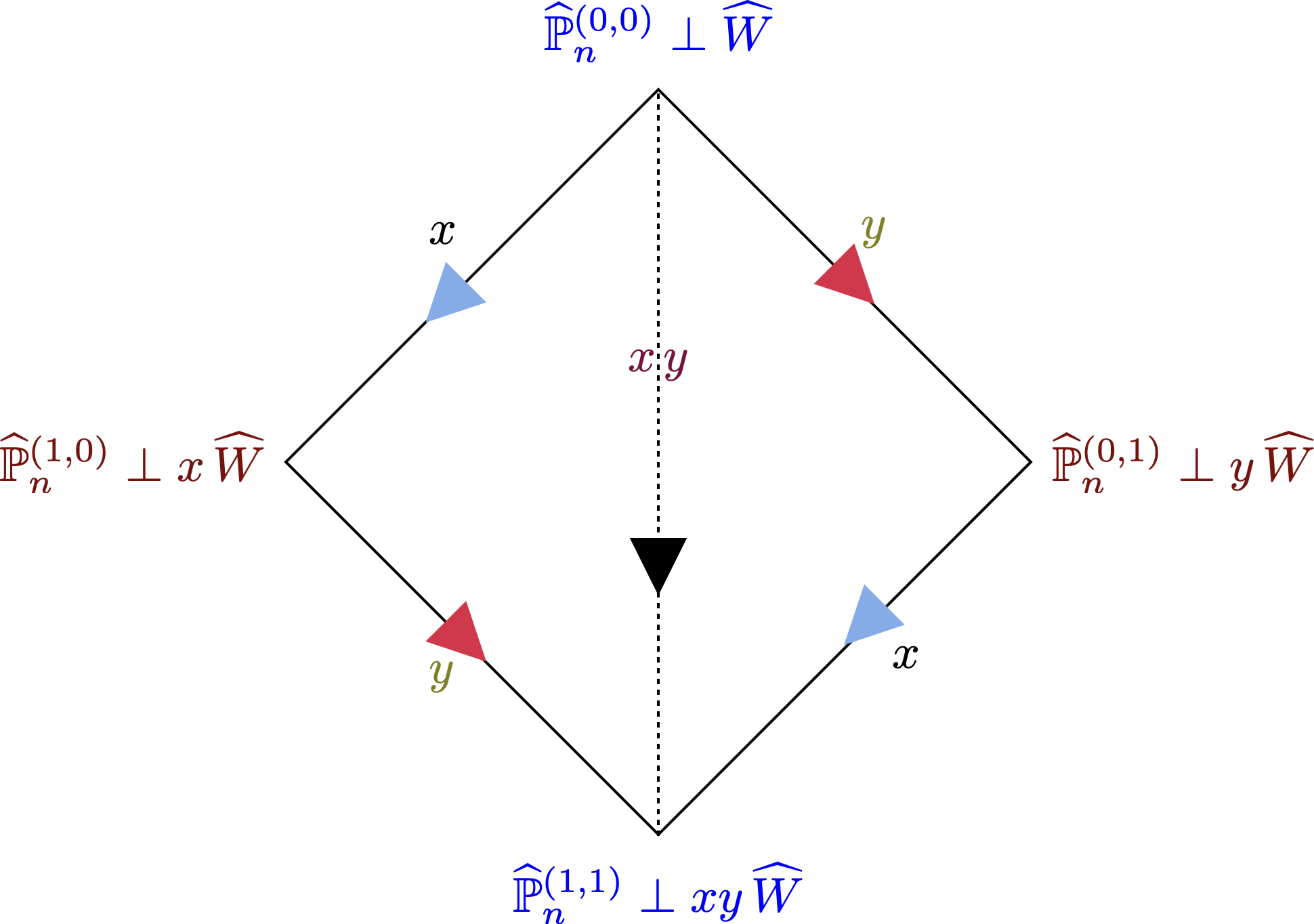}
\caption{Relation between the four weight functions and the corresponding polynomial systems.}
\end{figure}

\begin{proof}
From expression \eqref{S-P-R1} and the $x\,y$-symmetry of the inner product~\eqref{IP_S}, we get 
\begin{align*}
(\mS_{2n}(x,y), \mS_{2m} (x,y))  = & (\mP_n^{(0,0)}(x^2, y^2), \mP_m^{(0,0)}(x^2, y^2)) \\
& + (x\,y\,\mP_{n-1}^{(1,1)}(x^2, y^2), x\,y\,\mP_{m-1}^{(1,1)}(x^2, y^2)),
\end{align*}
On the one hand, if $n\neq m$, then $(\mS_{2n}(x,y),\mS_{2m}(x,y)) = \mathtt{0}$ if and only if
\begin{align*}
(\mP_n^{(0,0)}(x^2, y^2), \mP_m^{(0,0)}(x^2, y^2)) = \mathtt{0}, && 
(x\,y\,\mP_{n-1}^{(1,1)}(x^2, y^2), x\,y\,\mP_{m-1}^{(1,1)}(x^2, y^2)) = \mathtt{0},
\end{align*}
because the positivity of the inner product.

On the other hand, if $n = m$, then $(\mS_{2n}(x,y),\mS_{2n}(x,y)) = \mathbf{S}_{2n}$, a symmetric positive-definite matrix, and defining the matrices 
\begin{align*} 
\mathbf{P}^{(0,0)}_n = & (\mP_n^{(0,0)}(x^2, y^2), \mP_n^{(0,0)}(x^2, y^2)) 
=  \int_{\Omega} \mP^{(0,0)}_n(x^2,y^2)\mP^{(0,0)}_n(x^2,y^2)^\top W(x,y)\mathrm{d}x \mathrm{d}y \\
\mathbf{P}^{(1,1)}_{n-1} = & (x\,y\,\mP_{n-1}^{(1,1)}(x^2, y^2), x\,y\,\mP_{n-1}^{(1,1)}(x^2, y^2)) \\
= & \int_{\Omega} \mP^{(1,1)}_{n-1}(x^2,y^2)\mP^{(1,1)}_{n-1}(x^2,y^2)^\top x^2 y^2 W(x,y)\mathrm{d}x \mathrm{d}y, 
\end{align*}
they are symmetric of size $(2n+1)\times (2n+1)$, since $W(x,y)$ is a weight function on $\Omega$, and $x^2\,y^2\,W(x,y)$ is a positive definite Christoffel perturbation. Therefore, 
\begin{align*}
\mathbf{S}_{2n} = \mathbf{P}^{(0,0)}_n + \mathbf{P}^{(1,1)}_{n-1}.
\end{align*}
In order to recover a MOPS, we need to do a change of variable, and multiply times a suitable $\oJ$-matrix to shrink the vectors to an adequate size.
Hence, we define the change of variable $u= x^2$, $v=y^2$, and the integration domain will be defined~by
$\Omega^\mathbf * = \{(u,v)\in\mathbb{R}^2: u, v \geqslant 0, (\sqrt{u}, \sqrt{v})\in \Omega\}.$

Then, the PS $\{\widehat{\mP}_{n}^{(0,0)}\}_{n\geqslant 0} = \{\oJ_{n}^{(0,0)}\,\mP_{n}^{(0,0)}(u,v)\}_{n\geqslant 0}$ is orthogonal in the form
\begin{align*}
(\widehat{\mP}_{n}^{(0,0)},&\widehat{\mP}_{n}^{(0,0)})^{(0,0)}
=  \dfrac{1}{4}\int_{\Omega^\mathbf *} \widehat{\mP}_{n}^{(0,0)}(u,v)\widehat{\mP}_{n}^{(0,0)}(u,v)^\top W^{(0,0)}(u,v)\mathrm{d} u \mathrm{d} v\\
= & \dfrac{1}{4} \oJ_{n}^{(0,0)} \int_{\Omega^\mathbf *} 
\mP_{n}^{(0,0)}(u,v)\mP_{n}^{(0,0)}(u,v)^\top \dfrac{1}{\sqrt{u\,v}}\,W(\sqrt{u},\sqrt{v})\mathrm{d} u \mathrm{d} v (\oJ_{n}^{(0,0)})^\top \\
= & \oJ_{n}^{(0,0)} \int_{\Omega} 
\mP_{n}^{(0,0)}(x^2,y^2)\mP_{n}^{(0,0)}(x^2,y^2)^\top\,W(x, y) \, \mathrm{d} x \mathrm{d} y (\oJ_{n}^{(0,0)})^\top \\
= & \oJ_{n}^{(0,0)}\,\mathbf{P}_n^{(0,0)}\, (\oJ_{n}^{(0,0)})^\top = \widehat{\mathbf{P}}_n^{(0,0)},\\
(\widehat{\mP}_{n}^{(0,0)},&\widehat{\mP}_{m}^{(0,0)})^{(0,0)}
= \mathtt{0}.
\end{align*}
Moreover, $\widehat{\mathbf{P}}_n^{(0,0)}$ is a symmetric $(n+1)$ full rank matrix since $\{\widehat{\mP}_{n}^{(0,0)}\}_{n\geqslant 0}$ is a PS.

Acting in the same way on $\{\widehat{\mP}_{n}^{(1,1)}\}_{n\geqslant 0} = \{\oJ_{n}^{(1,1)}\,\mP_{n}^{(1,1)}(u,v)\}_{n\geqslant 0}$, we can prove the orthogonality relations
\begin{align*}
(\widehat{\mP}_{n}^{(1,1)},&\widehat{\mP}_{n}^{(1,1)})^{(1,1)}
= \dfrac{1}{4}\int_{\Omega^\mathbf *} \widehat{\mP}_{n}^{(1,1)}(u,v)\widehat{\mP}_{n}^{(1,1)}(u,v)^\top
\,W^{(1,1)}(u,v)\mathrm{d} u \mathrm{d} v\\
= & \dfrac{1}{4} \oJ_{n}^{(1,1)} \int_{\Omega^\mathbf *} 
\mP_{n}^{(1,1)}(u,v)\mP_{n}^{(1,1)}(u,v)^\top\,\sqrt{u\,v}\,W(\sqrt{u},\sqrt{v})\mathrm{d} u \mathrm{d} v 
(\oJ_{n}^{(1,1)})^\top \\
= & \oJ_{n}^{(1,1)}\,\int_{\Omega} \mP_{n}^{(1,1)}(x^2,y^2)\,\mP_{n}^{(1,1)}(x^2,y^2)^\top\,
x^2\,y^2\,W(x,y) \mathrm{d}x \mathrm{d}y\,(\oJ_{n}^{(1,1)})^\top\\
= & \oJ_{n}^{(1,1)}\,\mathbf{P}_{n}^{(1,1)}\, (\oJ_{n}^{(1,1)})^\top = \widehat{\mathbf{P}}_{n}^{(1,1)},\\
(\widehat{\mP}_{n}^{(1,1)}, &\widehat{\mP}_{m}^{(1,1)})^{(1,1)} = \mathtt{0}.
\end{align*}
Now, we multiply two odd symmetric polynomials, use \eqref{S-P-R2} and the $x\,y$-symmetry, obtaining
\begin{align*}
 (\mS_{2n+1}(x,y),\mS_{2m+1}(x,y)) = & (x\, \mP_n^{(1,0)}(x^2, y^2), x\, \mP_m^{(1,0)}(x^2, y^2)) \\
 &+ (y\,\mP_{n}^{(0,1)}(x^2, y^2), y\,\mP_{m}^{(0,1)}(x^2, y^2)).
\end{align*}
Using the same reasoning as in the even case, and defining the PS 
$\{\widehat{\mP}_{n}^{(1-j,j)}\}_{n\geqslant 0}$ $= \{\oJ_{n}^{(1-j,j)}\,\mP_{n}^{(1-j,j)}(u,v)\}_{n\geqslant 0} $, for $j=0,1$, we prove that they are orthogonal, as
\begin{align*}
(\widehat{\mP}_{n}^{(1-j,j)},\widehat{\mP}_{n}^{(1-j,j)})^{(1-j,j)}
= & \dfrac{1}{4}\int_{\Omega^\mathbf *} \widehat{\mP}_{n}^{(1-j,j)}(u,v)\widehat{\mP}_{n}^{(1-j,j)}(u,v)^\top W^{(1-j,j)}(u,v)
\mathrm{d} u \mathrm{d} v\\
= & \oJ_{n}^{(1-j,j)} \,\mathbf{P}_n^{(1-j,j)}\, (\oJ_{n}^{(1-j,j)})^\top = \widehat{\mathbf{P}}_{n}^{(1-j,j)},\\
(\widehat{\mP}_{n}^{(1-j,j)},\widehat{\mP}_{m}^{(1-j,j)})^{(1-j,j)}
= &\mathtt{0},
\end{align*}
which ends the proof.
\end{proof}

In a similar way we can prove the converse result.

\begin{theorem}\label{converse}
Let $\widehat{W}(x,y)$ be a weight function defined on $\Omega^\mathbf *\subset \mathbb{R}^2_+ = \{(x,y)\in\mathbb{R}^2: x, y \geqslant0\}$, and let $\{\widehat{\mP}^{(0,0)}_n\}_{n\geqslant0}$ be the corresponding monic OPS.
Let $\{\widehat{\mP}^{(1,0)}_n\}_{n\geqslant0}$, $\{\widehat{\mP}^{(0,1)}_n\}_{n\geqslant0}$, and $\{\widehat{\mP}^{(1,1)}_n\}_{n\geqslant0}$ be the respective MOPS associated with the modifications of the weight~function
\begin{align*}
W^{(1,0)}(x,y) = x\,\widehat{W}(x,y), &&
 W^{(0,1)}(x,y) = y\,\widehat{W}(x,y), && W^{(1,1)}(x,y) = x\,y\,\widehat{W}(x,y).
\end{align*}
Define the family of vector polynomials $\{\mathbb{S}_n\}_{n\geqslant0}$ by means of~\eqref{S-P-R1} and~\eqref{S-P-R2}, where $\{\mP_n^{(i,j)} = (\oJ_{n}^{(i,j)})^\top \, \widehat{\mP}^{(i,j)}_n\}_{n\geqslant0}$, for $i,j=0,1$.
Then, $\{\mathbb{S}_n\}_{n\geqslant0}$ is a $x\,y$-\emph{symmetric} monic orthogonal polynomial system associated with the weight function 
\begin{align*}
 W(x,y) = 4\,|x|\,|y|\,\widehat{W}(x^2, y^2) ,
 &&
(x,y) \in \Omega = \{(x,y)\in \mathbb{R}^2: (x^2, y^2) \in \Omega^\mathbf *\}.
\end{align*}
\end{theorem}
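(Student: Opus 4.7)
The plan is to run the argument of Theorem~\ref{direct} in reverse: the formulas~\eqref{S-P-R1}--\eqref{S-P-R2} now \emph{define} $\mS_n$ from the four given MOPS, and I have to verify that the resulting sequence is a monic $x\,y$-symmetric PS and orthogonal with respect to $W(x,y)=4|x||y|\widehat{W}(x^2,y^2)$.

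First, I would address the structural claim. Since $\mP^{(i,j)}_n=(\oJ_n^{(i,j)})^\top\widehat{\mP}^{(i,j)}_n$ inserts zeros at exactly the positions complementary to those extracted by $\oJ_n^{(i,j)}$, the vector $\mP_n^{(0,0)}(x^2,y^2)$ has its $n+1$ nonzero entries (monic polynomials of degree $n$ in $(x^2,y^2)$) at the positions where $xy\,\mP_{n-1}^{(1,1)}(x^2,y^2)$ vanishes, and likewise $x\,\mP_n^{(1,0)}(x^2,y^2)$ is nonzero exactly where $y\,\mP_n^{(0,1)}(x^2,y^2)$ is zero. Summing reproduces the \emph{zip} pattern of~\eqref{separation}, so $\mS_n$ is a monic PS with $n+1$ independent entries of total degree $n$, each $x\,y$-symmetric because it is either a polynomial in $(x^2,y^2)$ or $xy$ (respectively $x$, $y$) times such a polynomial.

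For orthogonality I would expand $(\mS_n,\mS_m)$ via~\eqref{S-P-R1}--\eqref{S-P-R2} and use that $\Omega$ is invariant under $x\mapsto -x$ and $y\mapsto -y$, which makes $W$ manifestly $x\,y$-symmetric. Every cross product and every cross-parity pairing $(\mS_{2n},\mS_{2m+1})$ picks up an unmatched factor $x$ or $y$ against $|x||y|$, so the integrand becomes odd in one of the variables and vanishes on the symmetric domain. The only surviving contributions are the four diagonal pieces
\begin{align*}
\int_{\Omega}\mP_{n}^{(i,j)}(x^2,y^2)\,\mP_{m}^{(i,j)\top}(x^2,y^2)\cdot 4\,x^{2i}y^{2j}|x||y|\,\widehat{W}(x^2,y^2)\,\mathrm{d}x\,\mathrm{d}y,\quad (i,j)\in\{0,1\}^2,
\end{align*}
whose integrands are even in both variables. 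Folding to the positive quadrant yields a factor $4$, and the substitution $u=x^2$, $v=y^2$, with $\mathrm{d}u\,\mathrm{d}v=4xy\,\mathrm{d}x\,\mathrm{d}y$, converts each piece, up to the overall constant $4$, into $(\oJ_n^{(i,j)})^\top(\widehat{\mP}_n^{(i,j)},\widehat{\mP}_m^{(i,j)})_{W^{(i,j)}}\oJ_m^{(i,j)}$, under the convention $W^{(0,0)}=\widehat{W}$. By hypothesis each $\{\widehat{\mP}^{(i,j)}_n\}$ is MOPS for its weight, so these matrices vanish when $n\neq m$.

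For $n=m$ the matrix $\mathbf{S}_n$ is the sum of two contributions of the form $(\oJ^{(i,j)})^\top\widehat{\mathbf{P}}^{(i,j)}\oJ^{(i,j)}$ whose nonzero row/column indices are complementary (even versus odd), so they occupy disjoint entries of the symmetric $(n+1)\times(n+1)$ output and their sum is positive definite. The main obstacle I foresee is not the algebra itself but the careful bookkeeping of the symmetrization factor and the Jacobian $4xy$, which must interact correctly with the explicit constant $4$ in $W=4|x||y|\widehat{W}(x^2,y^2)$ so that the resulting matrices are genuinely the $\widehat{\mathbf{P}}^{(i,j)}_n$ associated with $\widehat{W}$ and its three Christoffel modifications.
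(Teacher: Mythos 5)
Your proposal is correct and matches the paper's approach: the paper omits the proof of Theorem~\ref{converse}, stating only that it is obtained ``in a similar way'' to Theorem~\ref{direct}, and your argument is exactly that reversal --- cross terms annihilated by parity on the symmetric domain $\Omega$, and the surviving diagonal terms converted by $u=x^2$, $v=y^2$ into the orthogonality relations for $\widehat{W}$ and its three Christoffel modifications. The factor-of-$4$ bookkeeping you flag is harmless, since rescaling the weight by a positive constant affects neither orthogonality nor monicity.
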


As a consequence of Theorem~\ref{converse}, and since $W^{(1,0)}(x,y)$, $W^{(0,1)}(x,y)$, and $W^{(1,1)}(x,y)$ are Christoffel modifications of the original weight function, following~\cite{APPR14} and Lemma~\ref{shortRel} there exist matrices of adequate size such that there exist short relations between that families of orthogonal polynomials. In the next section we will describe explicitly those relations.

\section{B\"acklund-type relations}\label{Backlund_S}

Orthogonal polynomials in two variables satisfy a three term relation in each variable (\emph{cf.}~\cite{DX14}) written in a vector form and matrix coefficients. In this section we want to relate the matrix coefficients of the three term relations for the monic orthogonal polynomial sequences involved in Theorems~\ref{direct} and~\ref{converse}.

If $\{\mathbb{S}_n\}_{n\geqslant0}$ is a MOPS associated with a centrally symmetric weight function, the three term relation takes a simple form. In fact,~\cite[Theorem~3.3.10]{DX14} states that a measure is symmetric if, and only if, it satisfies the three term relations
\begin{align}\label{S_monic_3tr}
\begin{cases}
 x \, \mS _n(x,y) = \oL_{n,1} \, \mS _{n+1}(x,y) + \Gamma_{n,1}\,\mS_{n-1}(x,y), \\
 y \, \mS _n(x,y) = \oL_{n,2} \, \mS _{n+1}(x,y) + \Gamma_{n,2}\,\mS_{n-1}(x,y),
\end{cases}
\end{align}
for $n\geqslant0$, where $\mS _{-1}(x,y)=0$, $\Gamma_{-1,k}=0$, and
\begin{align*}
 \Gamma_{n,k} = \mathbf{S}_n\, \oL_{n-1,k}^{\top}\, \mathbf{S}_{n-1}^{-1}, &&
n\geqslant 1, && k=1,2,
\end{align*}
are matrices of size $(n+1)\times n$ with $\mathrm{rank}\, \Gamma_{n,k} = n$, for $k=1,2$.

The four systems of monic orthogonal polynomials $\{\widehat{\mathbb{P}}^{(i,j)}_n\}_{n\geqslant0}$, with $i,j=0,1$, involved in Theorems~\ref{direct} and~\ref{converse}, satisfy the three term relations
\begin{align*}
\begin{cases}
x \, \widehat{\mP}^{(i,j)} _n(x,y) = \oL_{n,1} \, \widehat{\mP}^{(i,j)} _{n+1}(x,y)
+ \widehat{\oD}_{n,1}^{(i,j)}\,\widehat{\mP}^{(i,j)} _n(x,y) 
+ \widehat{\oC}_{n,1}^{(i,j)}\,\widehat{\mP}_{n-1}^{(i,j)}(x,y), \\
y \, \widehat{\mP}^{(i,j)} _n(x,y) = \oL_{n,2} \, \widehat{\mP}^{(i,j)} _{n+1}(x,y) 
+ \widehat{\oD}_{n,2}^{(i,j)}\,\widehat{\mP}^{(i,j)} _n(x,y) 
+ \widehat{\oC}_{n,2}^{(i,j)}\,\widehat{\mP}_{n-1}^{(i,j)}(x,y),
\end{cases}
\end{align*}
where $\widehat{\mP}^{(i,j)} _{-1}=0$, $\widehat{\oC}_{-1,k}^{(i,j)}=0$, $\widehat{\oD}_{n,k}^{(i,j)}$ and 
$\widehat{\oC}_{n,k}^{(i,j)}$ are matrices of respective sizes $(n+1)\times (n+1)$ and $(n+1)\times n$, such that
\begin{align*}
 \widehat{\oD}_{n,1}^{(i,j)}\, \widehat{\mathbf{P}}_{n}^{(i,j)} & = ( x\,\widehat{\mP}^{(i,j)} _n,\,
\widehat{\mP}^{(i,j)} _{n})^{(i,j)}, &&
\widehat{\oD}_{n,2}^{(i,j)}\, \widehat{\mathbf{P}}_{n}^{(i,j)} = ( y\,\widehat{\mP}^{(i,j)} _n,\,
\widehat{\mP}^{(i,j)} _{n})^{(i,j)}, \\
\widehat{\oC}_{n,1}^{(i,j)}\, \widehat{\mathbf{P}}_{n-1}^{(i,j)} & = \widehat{\mathbf{P}}_{n}^{(i,j)}\, \oL_{n-1,1}^{\top}, 
 &&
\widehat{\oC}_{n,2}^{(i,j)}\, \widehat{\mathbf{P}}_{n-1}^{(i,j)} = \widehat{\mathbf{P}}_{n}^{(i,j)}\, \oL_{n-1,2}^{\top} ,
\end{align*}
where $\widehat{\mathbf{P}}_{n}^{(i,j)} = (\widehat{\mP}^{(i,j)} _n,\,\widehat{\mP}^{(i,j)} _{n})^{(i,j)}$.
In addition, the $(n+1)\times n$ matrices $\widehat{\oC}_{n,k}^{(i,j)}$ have full rank $n$, for $i,j=0,1$ and $k=1,2$.

Suppose that the $x\,y$-\emph{symmetric} monic polynomial system $\{\mathbb{S}_n\}_{n\geqslant0}$ and the four families of MOPS are related by~\eqref{S-P-R1} and \eqref{S-P-R2}, where $\{\mP_n^{(i,j)} = (\oJ_{n}^{(i,j)})^\top \, \widehat{\mP}^{(i,j)}_n\}_{n\geqslant0}$, for $i,j=0,1$, are the respective families of \emph{big}~polynomials.

\begin{theorem}[B\"acklund-type relations]\label{Backlund}
In the above conditions, the following relations~hold, for all $n\geqslant 0$ and $k=1,2$,
\begin{align*}
\widehat{\oD}_{n,k}^{(0,0)} & = \oJ_{n}^{(0,0)}\,[\oL_{2n,k}\,\Gamma_{2n+1,k} + \Gamma_{2n,k}\,\oL_{2n-1,k}]\,(\oJ_{n}^{(0,0)})^\top, \\
\widehat{\oC}_{n,k}^{(0,0)} & = \oJ_{n}^{(0,0)}\,\Gamma_{2n,k}\,\Gamma_{2n-1,k}\,(\oJ_{n-1}^{(0,0)})^\top,\\
\widehat{\oD}_{n,k}^{(1,1)} & = \oJ_{n}^{(1,1)}\,[\oL_{2n+2,k}\,\Gamma_{2n+3,k} + \Gamma_{2n+2,k}\,\oL_{2n+1,k}]\,(\oJ_{n}^{(1,1)})^\top, \\
\widehat{\oC}_{n,k}^{(1,1)} & = \oJ_{n}^{(1,1)}\,\Gamma_{2n+2,k}\,\Gamma_{2n+1,k}\,(\oJ_{n-1}^{(1,1)})^\top,\\ 
\widehat{\oD}_{n,k}^{(1,0)} & = \oJ_{n}^{(1,0)}\,[\oL_{2n+1,k}\,\Gamma_{2n+2,k} + \Gamma_{2n+1,k}\,\oL_{2n,k}]\,(\oJ_{n}^{(1,0)})^\top, \\
\widehat{\oC}_{n,k}^{(1,0)} & = \oJ_{n}^{(1,0)}\,\Gamma_{2n+1,k}\,\Gamma_{2n,k}\,(\oJ_{n-1}^{(1,0)})^\top,\\
\widehat{\oD}_{n,k}^{(0,1)} & = \oJ_{n}^{(0,1)}\,[\oL_{2n+1,k}\,\Gamma_{2n+2,k} + \Gamma_{2n+1,k}\,\oL_{2n,k}]\,(\oJ_{n}^{(0,1)})^\top, \\
\widehat{\oC}_{n,k}^{(0,1)} & = \oJ_{n}^{(0,1)}\,\Gamma_{2n+1,k}\,\Gamma_{2n,k}\,(\oJ_{n-1}^{(0,1)})^\top.
\end{align*}
with the convention that the matrix with negative indices is taken as a zero matrix.
\end{theorem}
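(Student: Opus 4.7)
The plan is to iterate the three-term relations \eqref{S_monic_3tr} of $\{\mS_n\}$, substitute the decompositions \eqref{S-P-R1} and \eqref{S-P-R2}, and then project by the $\oJ_n^{(i,j)}$ matrices to produce three-term relations for each of the four small families $\{\widehat{\mP}_n^{(i,j)}\}$. Concretely, I would first substitute \eqref{S-P-R1} and \eqref{S-P-R2} into the two identities
$$x\mS_{2n} = \oL_{2n,1}\mS_{2n+1} + \Gamma_{2n,1}\mS_{2n-1},\qquad x\mS_{2n+1} = \oL_{2n+1,1}\mS_{2n+2} + \Gamma_{2n+1,1}\mS_{2n},$$
and separate the resulting equations by the parity classes $\{1,x,y,xy\}$ over the subring of polynomials in $u=x^2$, $v=y^2$. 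This yields four linear identities among the \emph{big} polynomials, for instance
$$\mP_n^{(0,0)} = \oL_{2n,1}\mP_n^{(1,0)} + \Gamma_{2n,1}\mP_{n-1}^{(1,0)},\qquad u\,\mP_n^{(1,0)} = \oL_{2n+1,1}\mP_{n+1}^{(0,0)} + \Gamma_{2n+1,1}\mP_n^{(0,0)},$$
together with two analogous ones linking $\mP^{(1,1)}$ with $\mP^{(0,1)}$.

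Next I would chain these base identities pairwise: multiplying the ``non-$u$'' identity by $u$ and substituting the ``$u\cdot\mP$'' identity (and its shift at $n-1$) produces a genuine three-term recurrence in $u$ for each of the four big families, whose coefficients are combinations of $\oL\,\oL$, $\oL\,\Gamma+\Gamma\,\oL$, and $\Gamma\,\Gamma$ built from the matrices in \eqref{S_monic_3tr} at consecutive indices. Projecting from the left by $\oJ_n^{(i,j)}$ and using $\mP_n^{(i,j)} = (\oJ_n^{(i,j)})^\top\widehat{\mP}_n^{(i,j)}$ on the right converts this into a three-term relation for $\widehat{\mP}_n^{(i,j)}$. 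Lemma~\ref{J-L} lets me collapse every product $\oJ_n^{(i,j)}\,\oL_{2n+i+j,k}\,\oL_{2n+i+j+1,k}$ to $\oL_{n,k}\,\oJ_{n+1}^{(i,j)}$, so the leading term has the correct shape $\oL_{n,1}\widehat{\mP}_{n+1}^{(i,j)}$ required by a monic three-term relation; reading off the remaining matrices then gives the claimed formulas for $\widehat{\oD}_{n,1}^{(i,j)}$ and $\widehat{\oC}_{n,1}^{(i,j)}$. The case $k=2$ is handled by the same mechanism, starting from the $y$-three-term relation.

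The main obstacle is not the algebra but the index bookkeeping: for each of the eight target formulas one must identify the correct pair of base identities to chain, the correct shifted index at which to substitute, and the correct pair of $\oJ$--$\oL$ simplifications from Lemma~\ref{J-L}. Once one verifies that $\oJ_n^{(i,j)}\,\oL_{2n+i+j,k}\,\oL_{2n+i+j+1,k} = \oL_{n,k}\,\oJ_{n+1}^{(i,j)}$ in all eight $(i,j,k)$-cases --- each instance following from a composition of two of the four relations in Lemma~\ref{J-L} --- the eight formulas in the theorem emerge by the same procedure.
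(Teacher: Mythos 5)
Your proposal follows essentially the same route as the paper: the paper likewise extracts four ``short relations'' among the big families by substituting \eqref{S-P-R1}--\eqref{S-P-R2} into the block form of \eqref{S_monic_3tr} and separating by parity (packaged there as a $2\times2$ matrix identity plus the cancellation Lemma~\ref{simp}), chains them pairwise into three-term relations for the big polynomials (Lemma~\ref{TTR_BigPoly}), projects by the $\oJ_n^{(i,j)}$ using Lemma~\ref{J-L}, and identifies coefficients. Your base identities and the composed relation $\oJ_n^{(i,j)}\,\oL_{2n+i+j,k}\,\oL_{2n+i+j+1,k}=\oL_{n,k}\,\oJ_{n+1}^{(i,j)}$ match the paper's intermediate lemmas, so the argument is correct and not materially different.
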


\begin{remark}
For $i=0,1$, we must observe that left multiplication by $\oJ_{n}^{(i,0)}$ eliminates the even rows of the matrices, and the left multiplication by $\oJ_{n}^{(i,1)}$ eliminates the odd rows of the matrices. The right multiplication by $(\oJ_{n}^{(i,0)})^\top$ eliminates the even columns, and the multiplication by $(\oJ_{n}^{(i,1)})^\top$ eliminates the odd columns of the matrices.
\end{remark}

We divide the proof in several lemmas starting from a useful one for symmetric~polynomials.

\begin{lemma}\label{simp}
Let $p_{i,j}(x,y), q_{i,j}(x,y)$, $i=0,1$, be polynomials of the same parity order. If
\begin{align*}
\begin{bmatrix}
p_{0,0}(x,y) & p_{0,1}(x,y)\\
p_{1,0}(x,y) & p_{1,1}(x,y)
\end{bmatrix}
\begin{bmatrix}
1\\
x
\end{bmatrix}
= \begin{bmatrix}
q_{0,0}(x,y) & q_{0,1}(x,y)\\
q_{1,0}(x,y) & q_{1,1}(x,y)
\end{bmatrix}
\begin{bmatrix}
1 \\
x
\end{bmatrix} ,
\end{align*}
then
\begin{align*}
\begin{bmatrix}
p_{0,0}(x,y) & p_{0,1}(x,y)\\
p_{1,0}(x,y) & p_{1,1}(x,y)
\end{bmatrix}
 = \begin{bmatrix}
q_{0,0}(x,y) & q_{0,1}(x,y)\\
q_{1,0}(x,y) & q_{1,1}(x,y)
\end{bmatrix}.
\end{align*}
\end{lemma}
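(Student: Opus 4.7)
The plan is to unpack the matrix equation row by row and then apply the uniqueness of the even/odd decomposition of a polynomial in $x$ (treating $y$ as a parameter). Writing out each row of the given identity yields, for $i=0,1$, the scalar equality
\[
p_{i,0}(x,y) + x\,p_{i,1}(x,y) = q_{i,0}(x,y) + x\,q_{i,1}(x,y),
\]
which I would rearrange as
\[
p_{i,0}(x,y) - q_{i,0}(x,y) = x\,\bigl[q_{i,1}(x,y) - p_{i,1}(x,y)\bigr].
\]

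Next I would exploit the ``same parity order'' hypothesis in the way it is used in the sequel: the entries $p_{i,0},q_{i,0}$ share a common parity in $x$, and $p_{i,1},q_{i,1}$ share the same common parity in $x$, so that multiplication by the factor $x$ flips the parity class. The left-hand side of the displayed equation then lies in one fixed parity class (even or odd in $x$), while the right-hand side, being $x$ times a polynomial of that same parity, lies in the opposite parity class. Since a polynomial that is simultaneously $x$-even and $x$-odd must vanish identically, both sides are zero, giving $p_{i,0}=q_{i,0}$ and $p_{i,1}=q_{i,1}$ for $i=0,1$, which is the desired equality of the two matrices.

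There is no genuine obstacle: this statement is really just the uniqueness of the splitting of an $x$-polynomial into its even and odd parts, dressed up in matrix form to be invoked cleanly inside the proof of Theorem~\ref{Backlund}. The only point requiring a little care is fixing the interpretation of the parity hypothesis so that $p_{i,0}$ and $x\,p_{i,1}$ indeed sit in complementary parity classes; once that is pinned down, the argument above is essentially immediate.
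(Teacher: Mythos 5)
Your proof is correct. The paper actually states Lemma~\ref{simp} without any proof, treating it as evident, and your argument---reading the row $i$ identity as $p_{i,0}-q_{i,0}=x\,(q_{i,1}-p_{i,1})$ and invoking the uniqueness of the even/odd decomposition in $x$ (in the application all entries are polynomials in $(x^2,y^2)$, possibly times $y$, hence even in $x$, so $p_{i,0}$ and $x\,p_{i,1}$ lie in complementary parity classes)---is exactly the justification the authors evidently had in mind. The only micro-step worth making explicit is that $x\,(q_{i,1}-p_{i,1})=0$ forces $q_{i,1}=p_{i,1}$ because $x$ is not a zero divisor in the polynomial ring.
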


Secondly, we deduce the relations between the \emph{big} families of polynomials.

\begin{lemma}\label{ShortRel_BigPoly}
The four \emph{big} families of polynomials $\{\mP^{(i,j)}_n\}_{n\geqslant0}$, for $i,j=0,1$, defined by~\eqref{S-P-R1} and~\eqref{S-P-R2}, are related by the expressions:
\begin{align}
\mP^{(0,0)}_{n}(x, y) & = \oL_{2n,k}\,\mP^{(2-k,k-1)}_{n}(x, y) + \Gamma_{2n,k}\,\mP^{(2-k,k-1)}_{n-1}(x, y)	,\label{eq-2}\\
x_k\,\mP^{(1,1)}_{n-1}(x, y) & = \oL_{2n,k}\,\mP^{(k-1,2-k)}_{n}(x, y) + \Gamma_{2n,k}\,\mP^{(k-1,2-k)}_{n-1}(x, y),\label{eq-1}\\
\mP^{(k-1,2-k)}_{n}(x, y) & = \oL_{2n+1,k}\,\mP^{(1,1)}_{n}(x, y) + \Gamma_{2n+1,k}\,\mP^{(1,1)}_{n-1}(x, y), \label{eq-4} \\
x_k\,\mP^{(2-k,k-1)}_{n}(x, y) & = \oL_{2n+1,k}\,\mP^{(0,0)}_{n+1}(x, y) + \Gamma_{2n+1,k}\,\mP^{(0,0)}_{n}(x, y),\label{eq-3}
\end{align}
for $k=1,2$ and denoting $x_1=x, x_2 = y$ for brevity.
\end{lemma}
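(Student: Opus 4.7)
The plan is to derive the four identities directly from the three term recurrence \eqref{S_monic_3tr} for the $x\,y$-symmetric MOPS $\{\mS_n\}_{n\geqslant 0}$. Substituting the parity decompositions \eqref{S-P-R1}--\eqref{S-P-R2} into both sides of each three term relation produces an identity in which every summand lies in a single parity class in $(x,y)$: each entry of a big vector $\mP^{(i,j)}_n$ is a polynomial in $(x^2,y^2)$, so it comes multiplied by exactly the monomial prefactor $1$, $x$, $y$ or $xy$ dictated by the indices $(i,j)$. A parity separation in the spirit of Lemma~\ref{simp} then lets us equate coefficients in each parity class independently; dividing by the common monomial factor and relabelling $(x^2,y^2)\mapsto(x,y)$ returns the desired identity.

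For the even index case I would write \eqref{S_monic_3tr} at $n=2m$ and $k\in\{1,2\}$,
\[
x_k\,\mS_{2m}=\oL_{2m,k}\,\mS_{2m+1}+\Gamma_{2m,k}\,\mS_{2m-1},
\]
substitute \eqref{S-P-R1} on the left and \eqref{S-P-R2} on the right, and observe that the left-hand side splits into a piece carried by $x_k$ (namely $x_k\,\mP^{(0,0)}_m(x^2,y^2)$) and a piece carried by $x_{3-k}$ (namely $x_k\,xy\,\mP^{(1,1)}_{m-1}(x^2,y^2)=x_{3-k}\,x_k^2\,\mP^{(1,1)}_{m-1}(x^2,y^2)$); the right-hand side gives four summands of the same two types. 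Matching the $x_k$-class and cancelling $x_k$ yields \eqref{eq-2}; matching the $x_{3-k}$-class, cancelling $x_{3-k}$ and relabelling $x_k^2\mapsto x_k$ yields \eqref{eq-1}. The family indices $(2-k,k-1)$ and $(k-1,2-k)$ arise because these are precisely the ones which $x_k$ and $x_{3-k}$ pick out of the decomposition \eqref{S-P-R2}.

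For the odd index case I would write \eqref{S_monic_3tr} at $n=2m+1$,
\[
x_k\,\mS_{2m+1}=\oL_{2m+1,k}\,\mS_{2m+2}+\Gamma_{2m+1,k}\,\mS_{2m},
\]
substitute \eqref{S-P-R2} on the left and \eqref{S-P-R1} on the right; the left-hand side now splits into a fully-even piece $x_k^2\,\mP^{(2-k,k-1)}_m(x^2,y^2)$ and a doubly-odd piece $xy\,\mP^{(k-1,2-k)}_m(x^2,y^2)$, while the right-hand side contributes one fully-even group (from the $\mP^{(0,0)}$ summands of \eqref{S-P-R1}) and one doubly-odd group (from the $xy\,\mP^{(1,1)}$ summands). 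Matching the fully-even class and relabelling $x_k^2\mapsto x_k$ gives \eqref{eq-3}; matching the doubly-odd class, cancelling the common factor $xy$ and relabelling gives \eqref{eq-4}.

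The main obstacle is purely one of bookkeeping: one must check, in each parity class, that it is exactly the family with the index pair stated in the lemma that appears, and that the dimensions of the $\oL$ and $\Gamma$ blocks line up correctly with the lengths of the big vectors after the index shifts between $2m\pm1$, $2m$ and $2m+2$. These verifications are routine but require care; no new idea beyond the parity separation is needed.
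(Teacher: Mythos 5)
Your proposal is correct and follows essentially the same route as the paper: substitute the parity decompositions \eqref{S-P-R1}--\eqref{S-P-R2} into the three term relations \eqref{S_monic_3tr} at even and odd indices and separate the resulting identity by parity class, which is exactly the content the paper packages into its $2\times2$ block-matrix computation and Lemma~\ref{simp}. The only difference is presentational (you treat the even and odd index cases separately rather than stacking $\mS_{2n}$ and $\mS_{2n+1}$ into one block identity), and your index bookkeeping checks out.
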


\begin{proof}
The expressions~\eqref{S-P-R1} and~\eqref{S-P-R2} can be matrically rewritten in the following form
\begin{align}
\begin{bmatrix}
\mS_{2n}(x,y) \\
\mS_{2n+1}(x,y)
\end{bmatrix}
& = 
\begin{bmatrix}
\mP^{(0,0)}_{n}(x^2,y^2) & y\,\mP^{(1,1)}_{n-1}(x^2,y^2) \\
y\,\mP^{(0,1)}_{n}(x^2,y^2) & \mP^{(1,0)}_{n}(x^2,y^2) 
\end{bmatrix}\!
\begin{bmatrix}
1\\x
\end{bmatrix}
\label{S-P-Q-1}\\ 
& = 
\begin{bmatrix}
\mP^{(0,0)}_{n}(x^2,y^2) & x\,\mP^{(1,1)}_{n-1}(x^2,y^2) \\
x\,\mP^{(1,0)}_{n}(x^2,y^2) & \mP^{(0,1)}_{n}(x^2,y^2) 
\end{bmatrix}\!
\begin{bmatrix}
1\\y
\end{bmatrix}.
\label{S-P-Q-2}
\end{align}
We can write the first three term relations~\eqref{S_monic_3tr} in the form
$$
x \!
\begin{bmatrix}
\mS_{2n} \\
\mS_{2n+1}
\end{bmatrix}
  =
\begin{bmatrix}
\mathtt{0} & \mathtt{0} \\
\oL_{2n+1,1} & \mathtt{0}
\end{bmatrix} \!
\begin{bmatrix}
\mS_{2n+2} \\
\mS_{2n+3} 
\end{bmatrix}
+
\begin{bmatrix}
\mathtt{0} & \oL_{2n,1} \\
\Gamma_{2n+1,1} & \mathtt{0}
\end{bmatrix}\!
\begin{bmatrix}
\mS_{2n} \\
\mS_{2n+1} 
\end{bmatrix}
+
\begin{bmatrix}
\mathtt{0} & \Gamma_{2n,1} \\
\mathtt{0} & \mathtt{0}
\end{bmatrix}\!
\begin{bmatrix}
\mS_{2n-2} \\
\mS_{2n-1} 
\end{bmatrix}
$$
where we have omitted the arguments $(x,y)$ for simplicity. Substituting~\eqref{S-P-Q-1}, we get
\begin{multline*}
x  \!
\begin{bmatrix}
\mP^{(0,0)}_{n} & y\,\mP^{(1,1)}_{n-1}\\
y\,\mP^{(0,1)}_{n} & \mP^{(1,0)}_{n}
\end{bmatrix}\!
\begin{bmatrix}
1\\x
\end{bmatrix}
= \left\{
\begin{bmatrix}
\mathtt{0}^{\phantom{(0)}} & \mathtt{0} \\
\oL_{2n+1,1}^{\phantom{(0)}} & \mathtt{0}
\end{bmatrix}\!
\begin{bmatrix}
\mP^{(0,0)}_{n+1} & y\,\mP^{(1,1)}_{n} \\
y\,\mP^{(0,1)}_{n+1} & \mP^{(1,0)}_{n+1} 
\end{bmatrix} \right. \\
\left. +
\begin{bmatrix}
\mathtt{0} & \oL_{2n,1}^{\phantom{(0)}} \\
\Gamma_{2n+1,1}^{\phantom{(0)}} & \mathtt{0}
\end{bmatrix}\!
\begin{bmatrix}
\mP^{(0,0)}_{n} & y\,\mP^{(1,1)}_{n-1} \\
y\,\mP^{(0,1)}_{n} & \mP^{(1,0)}_{n} 
\end{bmatrix} + 
\begin{bmatrix}
\mathtt{0} & \Gamma_{2n,1}^{\phantom{(0)}} \\
\mathtt{0} & \mathtt{0}^{\phantom{(0)}}
\end{bmatrix}\!
\begin{bmatrix}
\mP^{(0,0)}_{n-1} & y\,\mP^{(1,1)}_{n-2} \\
y\,\mP^{(0,1)}_{n-1} & \mP^{(1,0)}_{n-1} 
\end{bmatrix}
\right\}\!
\begin{bmatrix}
1\\x
\end{bmatrix} .
\end{multline*}
where we have omitted the arguments $(x^2, y^2)$ of the \textit{big} polynomials for brevity. Now, since
\begin{align*}
x \!\begin{bmatrix}
1\\ x
\end{bmatrix}
 = \begin{bmatrix}
0 & 1 \\ x^2 & 0 
\end{bmatrix}\!
\begin{bmatrix}
1\\x
\end{bmatrix},
\end{align*}
and applying Lemma~\ref{simp}, we deduce
\begin{multline*}
 \begin{bmatrix}
\mP^{(0,0)}_{n} & y\,\mP^{(1,1)}_{n-1}\\
y\,\mP^{(0,1)}_{n} & \mP^{(1,0)}_{n}
\end{bmatrix}\!
\begin{bmatrix}
0 & 1 \\ x^2 & 0 
\end{bmatrix}
= \begin{bmatrix}
\mathtt{0}^{\phantom{(0)}} & \mathtt{0} \\
\oL_{2n+1,1}^{\phantom{(0)}} & \mathtt{0}
\end{bmatrix} \!
\begin{bmatrix}
\mP^{(0,0)}_{n+1} & y\,\mP^{(1,1)}_{n} \\
y\,\mP^{(0,1)}_{n+1} & \mP^{(1,0)}_{n+1} 
\end{bmatrix} \\
+ \begin{bmatrix}
\mathtt{0} & \oL_{2n,1}^{\phantom{(0)}} \\
\Gamma_{2n+1,1}^{\phantom{(0)}} & \mathtt{0}
\end{bmatrix}\! 
\begin{bmatrix}
\mP^{(0,0)}_{n} & y\,\mP^{(1,1)}_{n-1} \\
y\,\mP^{(0,1)}_{n} & \mP^{(1,0)}_{n} 
\end{bmatrix}
 + 
\begin{bmatrix}
\mathtt{0} & \Gamma_{2n,1}^{\phantom{(0)}} \\
\mathtt{0} & \mathtt{0}^{\phantom{(0)}}
\end{bmatrix}\! 
\begin{bmatrix}
\mP^{(0,0)}_{n-1} & y\,\mP^{(1,1)}_{n-2} \\
y\,\mP^{(0,1)}_{n-1} & \mP^{(1,0)}_{n-1} 
\end{bmatrix} .
\end{multline*}
We finally arrive to,
\begin{multline*}
\begin{bmatrix}
x^2\,y\,\mP^{(1,1)}_{n-1} & \mP^{(0,0)}_{n}\\
x^2\,\mP^{(1,0)}_{n} & y\,\mP^{(0,1)}_{n}
\end{bmatrix} 
= \begin{bmatrix}
\mathtt{0} & \mathtt{0} \\
\oL_{2n+1,1}\,\mP^{(0,0)}_{n+1} & \oL_{2n+1,1}\,y\,\mP^{(1,1)}_{n} 
\end{bmatrix} \\
+ \begin{bmatrix}
\oL_{2n,1}\,y\,\mP^{(0,1)}_{n} & \oL_{2n,1}\,\mP^{(1,0)}_{n} \\
\Gamma_{2n+1,1}\,\mP^{(0,0)}_{n} & \Gamma_{2n+1,1}\,y\,\mP^{(1,1)}_{n-1} 
\end{bmatrix}\
 + 
\begin{bmatrix}
\Gamma_{2n,1}\,y\,\mP^{(0,1)}_{n-1} & \Gamma_{2n,1}\,\mP^{(1,0)}_{n-1} \\
\mathtt{0} & \mathtt{0}
\end{bmatrix}. 
\end{multline*}
Then, after a convenient simplification and by introducing the variable $(x, y)$, we deduce the expressions~\eqref{eq-2},~\eqref{eq-4},~\eqref{eq-1}, and~\eqref{eq-3} for $k=1$. The same discussion can be done for the second variable using \eqref{S-P-Q-2}, taking $k=2$. 
\end{proof}

The identities in Lemma~\ref{ShortRel_BigPoly} can be used to deduce \emph{three terms} relations for the \emph{big} polynomial families. Apparently,~\eqref{Big-P0-TTR}-\eqref{Big-R1-TTR} are three term relations for the bivariate polynomials $\{\mP_n^{(i,j)}\}_{n\geqslant0}$, $i,j=0,1$, but the these \emph{big} families are not polynomial~systems.

\begin{lemma}\label{TTR_BigPoly}
The families of \emph{big} bivariate polynomials $\{\mP_n^{(i,j)}\}_{n\geqslant0}$, for $i,j=0,1$, satisfy the relations
\begin{align}
x_k\,\mP^{(0,0)}_{n}  = & \oL_{2n,k} \oL_{2n+1,k}\mP^{(0,0)}_{n+1}
+ [\oL_{2n,k} \Gamma_{2n+1,k} + \Gamma_{2n,k}\oL_{2n-1,k}]\mP^{(0,0)}_{n} \label{Big-P0-TTR}\\
&+ \Gamma_{2n,k} \Gamma_{2n-1,k}\mP^{(0,0)}_{n-1}, \nonumber\\
x_k\,\mP^{(1,1)}_{n-1}   = & \oL_{2n,k} \oL_{2n+1,k}\mP^{(1,1)}_{n}
 + [ \oL_{2n,k} \Gamma_{2n+1,k} + \Gamma_{2n,k} \oL_{2n-1,k}]\mP^{(1,1)}_{n-1}\label{Big-R0-TTR}\\ 
&+ \Gamma_{2n,k}\Gamma_{2n-1,k}\mP^{(1,1)}_{n-2},\nonumber\\
x_k\,\mP^{(1,0)}_{n} = & \oL_{2n+1,k} \oL_{2n+2,k}\mP^{(1,0)}_{n+1} 
+ [\oL_{2n+1,k} \Gamma_{2n+2,k} + \Gamma_{2n+1,k} \oL_{2n,k}]\mP^{(1,0)}_{n}\label{Big-P1-TTR}\\ 
& + \Gamma_{2n+1,k} \Gamma_{2n,k}\mP^{(1,0)}_{n-1},\nonumber\\
x_k\,\mP^{(0,1)}_{n} = & \oL_{2n+1,k} \oL_{2n+2,k}\mP^{(0,1)}_{n+1}
 + [ \oL_{2n+1,k} \Gamma_{2n+2,k} + \Gamma_{2n+1,k} \oL_{2n,k}]\mP^{(0,1)}_{n}\label{Big-R1-TTR} \\
& + \Gamma_{2n+1,k} \Gamma_{2n,k}\mP^{(0,1)}_{n-1},\nonumber
\end{align}
for $k=1,2$ and $x_1=x, x_2 = y$.
\end{lemma}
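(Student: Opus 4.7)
The plan is to derive each of the four identities in Lemma~\ref{TTR_BigPoly} by composing exactly two of the four short relations from Lemma~\ref{ShortRel_BigPoly}. The trick is that the relations \eqref{eq-2}--\eqref{eq-3} connect $\mP^{(0,0)}$ with $\mP^{(2-k,k-1)}$, and \eqref{eq-1}--\eqref{eq-4} connect $\mP^{(1,1)}$ with $\mP^{(k-1,2-k)}$. Iterating one against the other produces a three-term-style identity within each single family.

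For \eqref{Big-P0-TTR}, I would start from \eqref{eq-2}, multiply both sides by $x_k$, and then apply \eqref{eq-3} at levels $n$ and $n-1$:
\begin{align*}
x_k\,\mP^{(0,0)}_n
&= \oL_{2n,k}\bigl(x_k\,\mP^{(2-k,k-1)}_n\bigr) + \Gamma_{2n,k}\bigl(x_k\,\mP^{(2-k,k-1)}_{n-1}\bigr)\\
&= \oL_{2n,k}\bigl[\oL_{2n+1,k}\mP^{(0,0)}_{n+1} + \Gamma_{2n+1,k}\mP^{(0,0)}_n\bigr] \\
&\quad + \Gamma_{2n,k}\bigl[\oL_{2n-1,k}\mP^{(0,0)}_n + \Gamma_{2n-1,k}\mP^{(0,0)}_{n-1}\bigr],
\end{align*}
and regrouping yields exactly \eqref{Big-P0-TTR}. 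Analogously, for \eqref{Big-R0-TTR} I would start from \eqref{eq-1} and substitute \eqref{eq-4} at levels $n$ and $n-1$ to express $\mP^{(k-1,2-k)}_n,\mP^{(k-1,2-k)}_{n-1}$ back in terms of the $\mP^{(1,1)}$ family.

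For \eqref{Big-P1-TTR} and \eqref{Big-R1-TTR} the main subtlety is that $\mP^{(1,0)}$ and $\mP^{(0,1)}$ appear under \emph{different} labels in \eqref{eq-2}--\eqref{eq-3} depending on whether $k=1$ or $k=2$, so I would split into two cases. For \eqref{Big-P1-TTR} with $k=1$, note that $\mP^{(1,0)} = \mP^{(2-k,k-1)}$, so I would use \eqref{eq-3} first and then \eqref{eq-2} to substitute the resulting $\mP^{(0,0)}_{n+1},\mP^{(0,0)}_n$; for $k=2$ one has instead $\mP^{(1,0)} = \mP^{(k-1,2-k)}$, so the roles are swapped and I would apply \eqref{eq-4} multiplied by $y$, then \eqref{eq-1}. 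The treatment of \eqref{Big-R1-TTR} is identical after swapping the role of $\mP^{(1,0)}$ and $\mP^{(0,1)}$ in the two cases. In each of the four cases the structure of the identity — coefficient of $\mP^{(\cdot,\cdot)}_{n+1}$ being $\oL\,\oL$, the middle coefficient being $\oL\,\Gamma + \Gamma\,\oL$, and the trailing one being $\Gamma\,\Gamma$ — emerges automatically.

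There is really no serious obstacle: the computations are forced once the correct pairing of identities is chosen. The only place care is needed is the bookkeeping of index shifts (the difference between $2n$, $2n+1$, $2n+2$ appearing in the $\oL$ and $\Gamma$ matrices) and, in the mixed-parity cases \eqref{Big-P1-TTR}--\eqref{Big-R1-TTR}, correctly matching the label $(2-k,k-1)$ or $(k-1,2-k)$ with $(1,0)$ or $(0,1)$ for each value of $k$. With these remarks in place the four identities are obtained by four essentially parallel two-line substitutions, and the proof is complete.
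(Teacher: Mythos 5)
Your proposal is correct and is essentially identical to the paper's own (very terse) proof: the paper obtains the four identities by multiplying \eqref{eq-2} by $x_k$ and using \eqref{eq-3}, substituting \eqref{eq-4} into \eqref{eq-1}, substituting \eqref{eq-2} into \eqref{eq-3}, and multiplying \eqref{eq-4} by $x_k$ and substituting \eqref{eq-1} — exactly the pairings and index shifts you describe, including the observation that the $(1,0)$ and $(0,1)$ cases arise from the labels $(2-k,k-1)$ and $(k-1,2-k)$ as $k$ ranges over $1,2$.
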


\begin{proof}
For $k=1,2$, relations are obtained multiplying~\eqref{eq-2} by $x_k$ and using~\eqref{eq-3}; substituting~\eqref{eq-2} in~\eqref{eq-3}; replacing~\eqref{eq-4} in~\eqref{eq-1}; and multiplying~\eqref{eq-4} by~$x_k$ and substituting~\eqref{eq-1}.
\end{proof}

From the three terms relations of the \emph{big} polynomials obtained in Lemma~\ref{TTR_BigPoly}, we can deduce the three term relations for the \emph{small} ones by a multiplications of an adequate~$\oJ$-matrix.
In fact, multiplying, respectively,~\eqref{Big-P0-TTR} by $\oJ_{n}^{(0,0)}$,~\eqref{Big-R0-TTR} by $\oJ_{n}^{(1,1)}$,~\eqref{Big-P1-TTR} by $\oJ_{n}^{(1,0)}$, and~\eqref{Big-R1-TTR} by $\oJ_{n}^{(0,1)}$, and making use of Lemma~\ref{J-L} we arrive to the following result.

\begin{lemma}
The families of \emph{small} bivariate polynomials $\{\widehat{\mP}_n^{(i,j)}\}_{n\geqslant0}$, for $i,j=0,1$, satisfy the three term relations
\begin{align}
x_k\,\widehat{\mP}^{(0,0)}_{n} = & \oL_{n,k}\widehat{\mP}^{(0,0)}_{n+1}
+ \oJ_n^{(0,0)} [ \oL_{2n,k} \Gamma_{2n+1,k} + \Gamma_{2n,k}\oL_{2n-1,k} ](\oJ_n^{(0,0)})^\top\widehat{\mP}^{(0,0)}_{n}\nonumber \\ 
 & + \oJ_n^{(0,0)}\Gamma_{2n,1} \Gamma_{2n-1,1}(\oJ_n^{(0,0)})^\top\widehat{\mP}^{(0,0)}_{n-1},\label{Small-P0-TTR}\\
x_k\,\widehat{\mP}^{(1,1)}_{n-1} = & \oL_{n-1,k} \widehat{\mP}^{(1,1)}_{n}
 + \oJ_{n-1}^{(1,1)}[ \oL_{2n,k} \Gamma_{2n+1,k} + \Gamma_{2n,k} \oL_{2n-1,k}](\oJ_{n-1}^{(1,1)})^\top
 \widehat{\mP}^{(1,1)}_{n-1}\nonumber\\
& + \oJ_{n-1}^{(1,1)}\Gamma_{2n,1}\Gamma_{2n-1,1}(\oJ_{n-2}^{(1,1)})^\top\widehat{\mP}^{(1,1)}_{n-2},\label{Small-R0-TTR}\\
x_k\,\widehat{\mP}^{(1,0)}_{n} = & \oL_{n,k}\widehat{\mP}^{(1,0)}_{n+1}
 + \oJ_n^{(1,0)} [\oL_{2n+1,k} \Gamma_{2n+2,k} + \Gamma_{2n+1,k} \oL_{2n,k}](\oJ_n^{(1,0)})^\top\widehat{\mP}^{(1,0)}_{n}\nonumber \\
& + \oJ_n^{(1,0)} \Gamma_{2n+1,k} \Gamma_{2n,k}(\oJ_{n-1}^{(1,0)})^\top\widehat{\mP}^{(1,0)}_{n-1},\label{Small-P1-TTR}\\
x_k\,\widehat{\mP}^{(0,1)}_{n} = & \oL_{n,k} \widehat{\mP}^{(0,1)}_{n+1} 
+ \oJ_n^{(0,1)}[ \oL_{2n+1,k} \Gamma_{2n+2,k} + \Gamma_{2n+1,k} \oL_{2n,k}](\oJ_{n}^{(0,1)})^\top
\widehat{\mP}^{(0,1)}_{n}\nonumber \\
& + \oJ_n^{(0,1)}\Gamma_{2n+1,1} \Gamma_{2n,1}(\oJ_{n-1}^{(0,1)})^\top\widehat{\mP}^{(0,1)}_{n-1},\label{Small-R1-TTR}
\end{align}
for $k=1,2$ and $x_1=x, x_2 = y$.
\end{lemma}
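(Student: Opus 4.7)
The plan is to apply to each of the four big-polynomial three-term relations \eqref{Big-P0-TTR}--\eqref{Big-R1-TTR} in Lemma~\ref{TTR_BigPoly} the matching shrinker $\oJ_n^{(i,j)}$ on the left, and then rewrite every big vector on the right in terms of its small counterpart via the identity
\begin{equation*}
\mP_m^{(i,j)} = (\oJ_m^{(i,j)})^\top\,\widehat{\mP}_m^{(i,j)},\qquad i,j\in\{0,1\},
\end{equation*}
recorded just before Theorem~\ref{Backlund}. This identity is immediate from Lemma~\ref{representation}: the rows removed by $\oJ_m^{(i,j)}$ are precisely the zero rows of $\mP_m^{(i,j)}$, and transposing reinserts them. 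Once this substitution is carried out in the two lower-order terms of each relation, the conjugated expressions $\oJ_n^{(i,j)}[\cdots](\oJ_n^{(i,j)})^\top$ and $\oJ_n^{(i,j)}\,\Gamma\,\Gamma\,(\oJ_{n-1}^{(i,j)})^\top$ appearing in \eqref{Small-P0-TTR}--\eqref{Small-R1-TTR} fall out for free.

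The real work is then to show that, for each $(i,j)$ and each $k\in\{1,2\}$, the leading composite $\oJ_n^{(i,j)}\,\oL_{\cdot,k}\,\oL_{\cdot,k}$ collapses to $\oL_{n,k}\,\oJ_{n+1}^{(i,j)}$ (or to $\oL_{n-1,k}\,\oJ_n^{(1,1)}$ in the shifted $(1,1)$ case), so that multiplying on the right by the appropriate big vector produces exactly $\oL_{n,k}\,\widehat{\mP}_{n+1}^{(i,j)}$. Each collapse is a two-step chain of substitutions drawn from Lemma~\ref{J-L}. For $(i,j)=(0,0)$ the chain is $\oJ_n^{(0,0)}\oL_{2n,k}=\oJ_n^{(2-k,k-1)}$ followed by $\oJ_n^{(2-k,k-1)}\oL_{2n+1,k}=\oL_{n,k}\oJ_{n+1}^{(0,0)}$; for $(i,j)=(1,1)$ one chains $\oJ_{n-1}^{(1,1)}\oL_{2n,k}=\oL_{n-1,k}\oJ_n^{(k-1,2-k)}$ with $\oJ_n^{(k-1,2-k)}\oL_{2n+1,k}=\oJ_n^{(1,1)}$.

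The main obstacle is the mixed families $(1,0)$ and $(0,1)$, where the two $\oL$-factors appear in the reversed order $\oL_{2n+1,k}\,\oL_{2n+2,k}$ and Lemma~\ref{J-L} is not uniform in $k$. Here one must split on $k$: for one value of $k$ the first $J$-$L$ identity already produces the factor $\oL_{n,k}$, while for the other value it first produces $\oJ_n^{(1,1)}$ and the factor $\oL_{n,k}$ is manufactured by the second step. Verifying that both branches end at the same expression $\oL_{n,k}\,\oJ_{n+1}^{(1,0)}$, respectively $\oL_{n,k}\,\oJ_{n+1}^{(0,1)}$, is the one piece of bookkeeping that requires care; once this is done, assembling the three pieces of each right-hand side reproduces \eqref{Small-P0-TTR}--\eqref{Small-R1-TTR} verbatim.
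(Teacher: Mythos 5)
Your proposal is correct and follows exactly the paper's route: the paper's (one-sentence) proof likewise left-multiplies each big three-term relation by the matching $\oJ$-matrix and invokes Lemma~\ref{J-L}, and your two-step chains of $J$-$L$ identities (including the $k$-dependent splitting for the mixed families) are precisely the bookkeeping the paper leaves implicit. No gaps.
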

Now, the B\"acklund-type relations contained in Theorem~\ref{Backlund} are proven identifying~coefficients.

As we have shown in Theorems \ref{direct} and \ref{converse}, the \emph{small} polynomial systems $\{\widehat{\mP}_n^{(i,j)}\}_{n\geqslant0}$, for $i+j \geqslant1$ are Christoffel modifications of the first family $\{\widehat{\mP}_n^{(0,0)}\}_{n\geqslant0}$. Then, by Lemma \ref{shortRel}, there exist short relations between that families. Lemma~\ref{ShortRel_BigPoly} also allows us to deduce short relations for the \emph{small} polynomial systems, multiplying by the adequate $\oJ$-matrix, and using Lemma~\ref{J-L}. Next result gives the coefficients in terms of the matrix coefficients of the three term relations of $\{\mathbb{S}_n\}_{n\geqslant0}$.

\begin{corollary}
The families of \emph{small} MOPS are related by
\begin{align*}
\widehat{\mP}^{(0,0)}_{n}(x, y) = & \widehat{\mP}^{(2-k,k-1)}_{n}(x, y) 
+ \widehat{\Gamma}_{n,k}^{(0,0)}\,\widehat{\mP}^{(2-k,k-1)}_{n-1}(x, y), \\
x_k\, \widehat{\mP}^{(1,1)}_{n-1}(x, y) = & \oL_{n-1,k}\,\widehat{\mP}^{(k-1,2-k)}_{n}(x, y) 
+ \widehat{\Gamma}_{n,2}^{(0,1)}\,\widehat{\mP}^{(k-1,2-k)}_{n-1}(x, y), \\
\widehat{\mP}^{(k-1,2-k)}_{n}(x, y) = & \widehat{\mP}^{(1,1)}_{n}(x, y) 
+ \widehat{\Gamma}_{n,k}^{(1,1)}\, \widehat{\mP}^{(1,1)}_{n-1}(x, y), \\
x_k\,\widehat{\mP}^{(2-k,k-1)}_{n}(x, y) = & \oL_{n,k}\, \widehat{\mP}^{(0,0)}_{n+1}(x, y) 
+ \widehat{\Gamma}_{n,k}^{(1,0)}\,\widehat{\mP}^{(0,0)}_{n}(x, y),
\end{align*}
where 
\begin{align*}
& \widehat{\Gamma}_{n,k}^{(0,0)} = \oJ_{n}^{(0,0)}\,\Gamma_{2n,k}\,(\oJ_{n-1}^{(1,0)})^\top,
& \widehat{\Gamma}_{n,k}^{(0,1)} = \oJ_{n-1}^{(1,1)}\,\Gamma_{2n,k}\,(\oJ_{n-1}^{(0,1)})^\top, \\ 
& \widehat{\Gamma}_{n,k}^{(1,1)} = \oJ_{n}^{(0,1)}\,\Gamma_{2n+1,k}\,(\oJ_{n-1}^{(1,1)})^\top, 
& \widehat{\Gamma}_{n,k}^{(1,0)} = \oJ_{n}^{(1,0)}\,\Gamma_{2n+1,k}\,(\oJ_{n}^{(0,0)})^\top. 
\end{align*}
\end{corollary}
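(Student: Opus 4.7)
The plan is to derive each of the four short relations by projecting the corresponding identity of Lemma~\ref{ShortRel_BigPoly} down from the \emph{big} polynomials onto the \emph{small} ones. For each of \eqref{eq-2}, \eqref{eq-4}, \eqref{eq-1}, and \eqref{eq-3}, I will left-multiply by the $\oJ$-matrix that converts the big vector on the left-hand side into the corresponding small vector via $\widehat{\mP}_n^{(i,j)} = \oJ_n^{(i,j)}\,\mP_n^{(i,j)}$. On the right-hand side, each product $\oJ_n^{(\cdot,\cdot)}\,\oL_{m,k}$ is simplified through Lemma~\ref{J-L}, and any remaining $\mP_n^{(i,j)}$ is replaced by $(\oJ_n^{(i,j)})^\top\,\widehat{\mP}_n^{(i,j)}$, which allows one to read off the coefficient matrices $\widehat{\Gamma}_{n,k}^{(i,j)}$.

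Concretely, for the first identity, multiply \eqref{eq-2} on the left by $\oJ_n^{(0,0)}$. The identity $\oJ_n^{(0,0)}\oL_{2n,k} = \oJ_n^{(2-k,k-1)}$ of Lemma~\ref{J-L} converts the first term on the right into $\widehat{\mP}_n^{(2-k,k-1)}$, whereas the last term becomes $\oJ_n^{(0,0)}\,\Gamma_{2n,k}\,(\oJ_{n-1}^{(2-k,k-1)})^\top\,\widehat{\mP}_{n-1}^{(2-k,k-1)}$, yielding the claimed relation together with the formula for $\widehat{\Gamma}_{n,k}^{(0,0)}$. The third identity of the corollary is handled identically: left-multiplying \eqref{eq-4} by $\oJ_n^{(k-1,2-k)}$ and applying $\oJ_n^{(k-1,2-k)}\oL_{2n+1,k} = \oJ_n^{(1,1)}$ delivers $\widehat{\mP}_n^{(1,1)}$ in the first term and leaves $\oJ_n^{(k-1,2-k)}\,\Gamma_{2n+1,k}\,(\oJ_{n-1}^{(1,1)})^\top\,\widehat{\mP}_{n-1}^{(1,1)}$ for the second.

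The two remaining identities carry a factor of $x_k$, which commutes with every $\oJ$-matrix and is therefore inert under the projection. Multiplying \eqref{eq-1} on the left by $\oJ_{n-1}^{(1,1)}$ and invoking $\oJ_{n-1}^{(1,1)}\oL_{2n,k} = \oL_{n-1,k}\,\oJ_n^{(k-1,2-k)}$ from Lemma~\ref{J-L} extracts the factor $\oL_{n-1,k}\,\widehat{\mP}_n^{(k-1,2-k)}$ on the right-hand side. Analogously, multiplying \eqref{eq-3} on the left by $\oJ_n^{(2-k,k-1)}$ and using $\oJ_n^{(2-k,k-1)}\oL_{2n+1,k} = \oL_{n,k}\,\oJ_{n+1}^{(0,0)}$ gives $\oL_{n,k}\,\widehat{\mP}_{n+1}^{(0,0)}$. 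In each case, replacing the remaining big polynomial in the residual term by $(\oJ^{(\cdot,\cdot)})^\top\,\widehat{\mP}^{(\cdot,\cdot)}$ produces a coefficient of the form $\oJ\,\Gamma\,\oJ^\top$, which is exactly one of the stated $\widehat{\Gamma}_{n,k}^{(\cdot,\cdot)}$.

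No substantial obstacle is anticipated; the argument is essentially an index-tracking exercise guided by Lemma~\ref{J-L}. The only point requiring care is the matching of the pair $(2-k,k-1)$ to $(1,0)$ when $k=1$ and to $(0,1)$ when $k=2$ (and similarly for $(k-1,2-k)$), so that the embedding $\oJ$-matrices appearing on the right in the four $\widehat{\Gamma}$-expressions receive their correct indices for each value of $k$.
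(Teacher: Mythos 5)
Your proposal is correct and is exactly the argument the paper intends: it derives the corollary by left-multiplying each identity of Lemma~\ref{ShortRel_BigPoly} by the appropriate $\oJ$-matrix, simplifying $\oJ\,\oL$ products via Lemma~\ref{J-L}, and reinserting $\mP_m^{(i,j)}=(\oJ_m^{(i,j)})^\top\widehat{\mP}_m^{(i,j)}$ to read off the $\widehat{\Gamma}$'s. Your closing caveat about the $k$-dependence of the indices $(2-k,k-1)$ and $(k-1,2-k)$ in the embedding $\oJ$-matrices is well taken, since the stated formulas for the $\widehat{\Gamma}_{n,k}^{(i,j)}$ record only the $k=1$ instance of those indices.
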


These matrices $\widehat{\Gamma}$'s enable us to reinterpret the block Jacobi matrix associated with the polynomials sequences~$\widehat{\mP}$'s in terms of a $\pmb{\mathsf{L}} \pmb{\mathsf{U}}$ or $\pmb{\mathsf{U}} \pmb{\mathsf{L}}$ representation. In fact,
for $k=1,2$, we define the block matrices
\begin{align*}
\pmb{\mathsf{L}}^{0}_k
=
\begin{bmatrix}
 \operatorname I \\
 \widehat{\Gamma}_{1,k}^{(0,0)} & \operatorname I \\
 & \widehat{\Gamma}_{2,k}^{(0,0)} & \operatorname I \\
 & & \ddots & \ddots
\end{bmatrix},
 &&
\pmb{\mathsf{L}}^{1}_k
=
\begin{bmatrix}
 \operatorname I \\
 \widehat{\Gamma}_{1,k}^{(1,1)} & \operatorname I \\
 & \widehat{\Gamma}_{2,k}^{(1,1)} & \operatorname I \\
 & & \ddots & \ddots
\end{bmatrix},\\
\pmb{\mathsf{U}}^{0}_k
=
\begin{bmatrix}
 \widehat{\Gamma}_{1,k}^{(0,1)} & \operatorname L_{0,k} \\
 & \widehat{\Gamma}_{2,k}^{(0,1)} & \operatorname L_{1,k} \\
 & & \ddots & \ddots
\end{bmatrix},
 &&
\pmb{\mathsf{U}}^{1}_k
=
\begin{bmatrix}
 \widehat{\Gamma}_{1,k}^{(1,0)} & \operatorname L_{0,k} \\
 & \widehat{\Gamma}_{2,k}^{(1,0)} & \operatorname L_{1,k} \\
 & & \ddots & \ddots
\end{bmatrix},
\end{align*}
we recover the recurrence relations~\eqref{Small-P0-TTR}, \eqref{Small-R0-TTR}, \eqref{Small-P1-TTR}, \eqref{Small-R1-TTR},  respectively
\begin{align*}
x_k \, \pmb{\mathcal P}^{(0,0)} & = x_k \, \pmb{\mathsf{L}}^{0}_k \, \pmb{\mathcal P}^{(2-k,k-1)}
 = \pmb{\mathsf{L}}^{0}_k \, \pmb{\mathsf{U}}^{1}_k \, \pmb{\mathcal P}^{(0,0)} , \\
 x_k \, \pmb{\mathcal P}^{(1,1)} & = \pmb{\mathsf{U}}^{0}_k \, \pmb{\mathcal P}^{(k-1,2-k)} 
 = \pmb{\mathsf{U}}^{0}_k \, \pmb{\mathsf{L}}^{1}_k \, \pmb{\mathcal P}^{(1,1)} , \\
x_k \, \pmb{\mathcal P}^{(2-k, k-1)} & = \pmb{\mathsf{U}}^{1}_k \, \pmb{\mathcal P}^{(0,0)}
 = \pmb{\mathsf{U}}^{1}_k \, \pmb{\mathsf{L}}^{0}_k \, \pmb{\mathcal P}^{(2-k, k-1)} ,  \\
x_k \, \pmb{\mathcal P}^{(k-1,2-k)} & = x_k \,\pmb{\mathsf{L}}^{1}_k \,  \pmb{\mathcal P}^{(1,1)}
 = \pmb{\mathsf{L}}^{1}_k \, \pmb{\mathsf{U}}^{0}_k \, \pmb{\mathcal P}^{(k-1,2-k)} , 
\end{align*}
denoting $x_1=x$, $x_2=y$, and, for $i,j=0,1$, the column vector $\pmb{\mathcal P}^{(i,j)}$ is defined as
\begin{align*}
\pmb{\mathcal P}^{(i,j)} =
\begin{bmatrix}
(\widehat{\mathbb P}^{(i,j)}_0)^\top &
(\widehat{\mathbb P}^{(i,j)}_1)^\top &
\cdots
\end{bmatrix}^\top.
\end{align*}

\section{A case study}

Moreover, if a weight function can be represented as $W(x,y) = \widetilde{W}(x^2, y^2)$, then 
it is $x\,y$-symmetric.

Finally, we totally describe the connection between bivariate polynomials orthogonal with respect to a $x\,y$-symmetric weight function defined on the unit ball of $\mathbb{R}^2$, defined by 
\begin{align*}
\mathbf{B}^2 = \{(x,y)\in \mathbb{R}^2: x^2 + y^2 \leqslant 1\},
\end{align*}
and bivariate orthogonal polynomials defined on the simplex
\begin{align*}
\mathbf{T}^2 = \{(x,y)\in\mathbb{R}^2: x, y \geqslant 0, x+y\leqslant 1\},
\end{align*}
completing the discussion started by Y. Xu in \cite{Xu98, Xu01} for the even ball polynomials in each of its variables. 

Following \cite[section 4]{Xu01}, let $W^\mathbf{B}(x,y)= W(x^2,y^2)$ be a weight function defined on the unit ball on $\mathbb{R}^2$, and let 
\begin{align}\label{weigh_T}
W^\mathbf{T}(u,v) = \frac{1}{\sqrt
{u\,v}} \, W(u,v), && (u, v)\in \mathbf{T}^2.
\end{align}
Observe that $W^\mathbf{B}(x,y)$ is a $x\,y$-symmetric weight function defined on $\mathbf{B}^2$.

For $n\geqslant0$, and $0\leqslant k \leqslant n$, let $S_{2n,2k}(x, y)$ be an orthogonal polynomial associated to the weight function $W^\mathbf{B}$ of even degree in each of variables. Then Y. Xu proved that it can be written in terms of orthogonal polynomials on the simplex as
\begin{align*}
S_{2n,2k}(x,y) = P_{n,k} (x^2,y^2), 
\end{align*}
where $P_{n,k} (x,y)$ is an orthogonal polynomial of total degree $n$ associated to $W^\mathbf{T}$. 

We can answer the question that what is about the leftover polynomials,~\emph{i.e.}, we can give explicitly the shape of the polynomials orthogonal with respect to $W^\mathbf{B}(x,y)$. Following our results, these polynomials are related to new families of bivariate orthogonal polynomials, resulting from a Christoffel modification that we will explicitly identify. 

Let $\{\mathbb{S}_n\}_{n\geqslant0}$ be the monic orthogonal polynomial system associated with the $x\,y$-symmetric weight function $W^\mathbf{B}(x,y)$, satisfying \eqref{IP_S}. 

If the explicit expression of every monic vector polynomial is given by
\begin{align*}
\mS_n(x,y)
= \begin{bmatrix}
S_{n,0}(x,y) & S_{n,1}(x,y) & S_{n,2}(x,y) & \cdots & S_{n,n}(x,y)
\end{bmatrix}^\top,
\end{align*}
then every polynomial $S_{n,k}(x,y)$, for $0\leqslant k \leqslant n$, is $x\,y$-\emph{symmetric} by Lemma \ref{xy-sim}. As we have proved, the vector of polynomials $\mS_{n}(x,y)$ can be separated in a \emph{zip} way,
\emph{cf.}~\eqref{separation}, attending to the parity of the powers of $x$ and $y$, in its entries.

We deduce four families: $\{S_{2n,2k}(x,y): 0\leqslant k \leqslant n\}_{n\geqslant0}$, 
$\{S_{2n,2k+1}(x,y): 0 \leqslant k \leqslant n-1\}_{n\geqslant0}$, $\{S_{2n+1,2k}(x,y): 0\leqslant k \leqslant n\}_{n\geqslant0}$, and $\{S_{2n+1,2k+1}(x,y): 0\leqslant k \leqslant n\}_{n\geqslant0}$. Only the first family was identified in \cite{Xu98, Xu01} under the transformation $(x^2,y^2) \mapsto (x,y)$ as a family of polynomials orthogonal on $\mathbf{T}^2$ with respect to the weight function \eqref{weigh_T}. We observe that the second family has the common factor $x\,y$, the third family has $x$ as common factor, and the fourth family has common factor the second variable $y$.

Working as in Section~\ref{symm_2}, we separate the symmetric monic orthogonal polynomial vectors as
it was shown in Lemma~\ref{representation}
\begin{align*} 
\mS_{2n}(x,y) & = \mP_n^{(0,0)}(x^2, y^2) + x\,y\,\mP_{n-1}^{(1,1)}(x^2, y^2),\\
\mS_{2n+1}(x,y) & = x\, \mP_n^{(1,0)}(x^2, y^2) + y\,\mP_{n}^{(0,1)}(x^2, y^2) .
\end{align*}
After deleting all zeros in above vectors of polynomials and substituting the variables $(x^2,\,y^2)$ by $(x,\,y)$, we proved, in Theorem~\ref{direct} that

\medskip

\noindent
$\{\widehat{\mP}_n^{(0,0)}\}_{n\geqslant0} = \{\oJ_{n}^{(0,0)}\,\mP_n^{(0,0)}\}_{n\geqslant0}$
is a MOPS associated with the weight~function 
\begin{align*}
W^{(0,0)}(x,y) = \dfrac{1}{\sqrt{x\,y}}\,W^{\mathbf{B}}(\sqrt{x},\sqrt{y}),
\end{align*}

\noindent
$\{\widehat{\mP}_n^{(2-k,k-1)}\}_{n\geqslant0} = \{\oJ_{n}^{(2-k,k-1)}\,\mP_n^{(2-k,k-1)}\}_{n\geqslant0}$, for $k=1,2$,
are MOPS associated with the Chris\-tof\-fel modification 
\begin{align*}
W^{(2-k,k-1)}(x,y) = x_k \, W^{(0,0)}(x,y), 
\end{align*}

\noindent
$\{\widehat{\mP}_n^{(1,1)}\}_{n\geqslant0} = \{\oJ_{n}^{(1,1)}\,\mP_n^{(1,1)}\}_{n\geqslant0}$
is a MOPS associated with the Christoffel modification
\begin{align*}
W^{(1,1)}(x,y) = x\,y\, W^{(0,0)}(x,y),
\end{align*}
for all $ (x,y) \in \mathbf{T}^2$.

Therefore, we have described the complete relation between orthogonal polynomials on the ball with orthogonal polynomials on the simplex.

\section*{Acknowledgements}

AB acknowledges Centro de Matem\'atica da Universidade de Coimbra (CMUC) -- UID/MAT/ 00324/2020, funded by the Portuguese Government through FCT/MEC and co-funded by the European Regional Development Fund through the Partnership Agreement PT2020.

AFM acknowledges CIDMA Center for Research and Development in Mathematics and Applications (University of Aveiro) and the Portuguese Foundation for Science and Technology (FCT)
within project UID/MAT/04106/2020.

TEP thanks FEDER/Junta de Andaluc\'ia under the research project A-FQM-246-UGR20; MCIN/AEI 10.13039/501100011033 and FEDER funds by PGC2018-094932-B-I00; and IMAG-Mar\'ia de Maeztu grant CEX2020-001105-M.


\end{document}